\newtheorem{theorem}{Theorem}[section]
\newtheorem{lemma}[theorem]{Lemma}
\newtheorem{corollary}[theorem]{Corollary}
\theoremstyle{definition}
\newtheorem{definition}[theorem]{Definition}
\theoremstyle{remark}
\numberwithin{equation}{section}
 \numberwithin{equation}{section}
\begin{document}

\title{Results on normal harmonic and $\varphi$-normal harmonic mappings}
\author[N. Bharti]{Nikhil Bharti}
\address{
\begin{tabular}{lll}
&Nikhil Bharti\\
&Department of Mathematics\\
&University of Jammu\\
&Jammu-180006\\ 
&India\\
\end{tabular}}
\email{nikhilbharti94@gmail.com}

\author[N. V. Thin]{Nguyen Van Thin$^{*}$}
\address{
\begin{tabular}{lll}
&Nguyen Van Thin\\
&Department of Mathematics\\
&Thai Nguyen University of Education\\
&Luong Ngoc Quyen street\\
&Thai Nguyen City, Thai Nguyen\\ 
&Vietnam\\
\end{tabular}}
\email{thinmath@gmail.com}
\thanks{$^*$ Corresponding author: Nguyen Van Thin}

\begin{abstract}
In this paper, we study the concepts of normal functions and 
$\varphi$-normal functions in the framework of planar harmonic mappings. We establish the harmonic mapping counterpart of the well-known Zalcman-Pang lemma and as a consequence, we prove that a harmonic mapping whose spherical derivative is bounded away from zero is normal. Furthermore, we introduce the concept of the extended spherical derivative for harmonic mappings and obtain several sufficient conditions for a harmonic mapping to be $\varphi$-normal. 
\end{abstract}

\keywords{Normal functions, normal harmonic mappings, $\varphi$-normal functions, spherical derivative.}
\subjclass[2010]{Primary 30D45, 31A05; Secondary 30G30, 30H05}
\maketitle

\vspace{-0.6cm}
\section{Introduction and Main Results }\label{S:intro}

The concept of normal harmonic mappings is closely connected to the concept of normal functions, which in turn relates to Montel's theory of normal families (see \cite{schiff}). The first ideas on the concept of normal functions were attributed to Yosida \cite{yosida} and later explored by Noshiro \cite{noshiro}, though neither author used the term "normal function." The term normal function was coined by Lehto and Virtanen in their seminal paper \cite{lehto}, where they highlighted the relevance of normal functions to problems related to the boundary behavior of meromorphic functions. To state formally, a meromorphic function $f$ defined on the open unit disk $\mathbb{D}$ is said to be normal in $\mathbb{D}$ if $\mathcal{F}=\left\{f\circ \sigma: \sigma\in \text{Aut}(\mathbb{D})\right\}$ forms a normal family in $\mathbb{D},$ where $\text{Aut}(\mathbb{D})$ is the group of conformal automorphisms of $\mathbb{D}.$ This definition is due to Noshiro \cite[p. 149]{noshiro}, however, it can be broadened to apply to any simply connected domain (see \cite[p. 53]{lehto}). For a detailed account on normal functions and normal families as well, we refer the reader to the recent book by Dovbush and Krantz \cite{dovbush}. 

\smallskip

Recently, there has been growing interest in studying the concept of normal functions in the context of planar harmonic mappings, driven by the broad applicability of harmonic mappings in geometric function theory (see \cite{ahamed, arbelaez, bohra, deng}) and also due to their correlation with ideal fluid flow problems (see \cite{aleman, constantin}). In this paper, our primary objective is to conduct further investigation into the study of normal harmonic mappings, extending well-known results of meromorphic functions to  harmonic mappings from the unit disk $\mathbb{D}$ to the complex plane.  

\smallskip

Before we state our results, let us recall some basic notions and results of planar harmonic mappings.
A complex-valued function $f$ defined on a simply connected domain $D\subseteq\mathbb{C}$ is said to be a  harmonic mapping if it has a canonical decomposition of the form $f=h+\bar{g},$ where $h$ and $g$ are holomorphic in $D$ and $g(z_0)=0$ for some point $z_0\in D$ (see \cite{duren}). It is a long established result of Lewy \cite{lewy} that a harmonic mapping $f=h+\bar{g}$ is locally univalent and sense-preserving in $\mathbb{D}$ if and only if $h'(z)\neq 0$ and the Jacobian $J_{f}(z)>0,$ where $J_{f}(z)=|h'(z)|^2-|g'(z)|^2.$ Motivated by the work of Colonna \cite{colonna} on Bloch harmonic functions, Arbel\'{a}ez et al. \cite{arbelaez}, in $2019,$ introduced the idea of normal harmonic mappings. In brief, a harmonic mapping $f$ is said to be normal in $\mathbb{D}$ if it satisfies the following 
 Lipschitz-type condition: $$\sup\limits_{\substack{z\neq w \\ z, w~\in~\mathbb{D}}}\frac{\sigma(f(z), f(w))}{\Lambda_{h}(z, w)}<\infty,$$ where $\Lambda_{h}(z, w)$ is the hyperbolic distance between $z$ and $w$ given by $$\Lambda_{h}(z,w)=\frac{1}{2}\log\left(\frac{1+t}{1-t}\right), ~t=\left|\frac{z-w}{1-\bar{w}z}\right|.$$ Arbel\'{a}ez et al. \cite{arbelaez} also gave the following equivalent and more practicable characterization of normal harmonic mappings, which we adopt as our definition.
\smallskip

\begin{definition}\label{D: normal harmonic}\cite[Proposition 2.1]{arbelaez}
A harmonic mapping $f=h+\bar{g}$ in $\mathbb{D}$ is said to be {\it normal} in $\mathbb{D}$ if 
\begin{equation}\label{eq: hnm1}
\sup_{z\in \mathbb{D}} (1-|z|^2)f^{\#}(z)<\infty,
\end{equation}
 where 
 \begin{equation}\label{eq: spherical derivative}
 f^{\#}(z)=\frac{|h'(z)|+|g'(z)|}{1+|f(z)|^2}
 \end{equation}
 \end{definition}
 It is worthwhile to mention that, for a function $F,$ meromorphic on a domain $D\subseteq\mathbb{C},$ the quantity $F^\#(z)$ given by $F^\#(z)=|F'(z)|/(1+|F(z)|^2),$ is called the spherical derivative of $F$ (see \cite[p. 197]{marty}). We retain this terminology for the harmonic mapping $f$ and call the quantity $f^\#(z),$ defined in \eqref{eq: spherical derivative}, the spherical derivative of $f.$

\medskip

In this paper, we present several significant results that not only generalize previous findings but also introduce novel insights that, to the best of our knowledge, have not been explored before. Continuing the investigations as in \cite{arbelaez, deng}, we first establish a necessary and sufficient condition for a harmonic mapping in $\mathbb{D}$ to be normal, serving as the harmonic counterpart to the widely recognized Zalcman-Pang lemma (see \cite{chen-gu, hua, pang-zalcman}). This is stated as: 

\begin{theorem}\label{thm:zp} 
A non-constant mapping $f$ harmonic in $\mathbb{D}$ is normal if and only if there do not 
exist sequences of points $\{z_n\}\subset\mathbb{D}$ and of real numbers $\{\rho_n\}$ with $\rho_n>0$ and $\rho_n\to 0$ as $n\to\infty$ such that the functions   
$$g_n(\zeta);= \rho_n^{\alpha}f(z_n+ \rho_n \zeta)$$ converges locally uniformly in $\mathbb{C}$ to a non-constant harmonic mapping $g(\zeta)$ satisfying $g^{\#}(\zeta)\leq g^{\#}(0)=1,$ where $\alpha\in (-1,\infty).$
\end{theorem}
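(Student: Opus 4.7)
My plan is to establish both implications of the stated equivalence by importing the Pang--Zalcman rescaling scheme to the harmonic setting. To avoid a notational clash with the limit function $g$ in the theorem, I write the canonical decomposition of $f$ as $f = h + \bar{k}$, so that $f^\#(z) = (|h'(z)| + |k'(z)|)/(1 + |f(z)|^2)$. The rescaled harmonic mapping $g_n(\zeta) = \rho_n^\alpha f(z_n + \rho_n \zeta)$ decomposes as $g_n = h_n + \overline{k_n}$ with $h_n(\zeta) = \rho_n^\alpha h(z_n + \rho_n\zeta)$ and $k_n(\zeta) = \rho_n^\alpha k(z_n + \rho_n\zeta)$, yielding the algebraic identity
\begin{equation*}
g_n^\#(\zeta) \;=\; \frac{\rho_n^{\alpha+1}\bigl(|h'(z_n + \rho_n\zeta)| + |k'(z_n + \rho_n\zeta)|\bigr)}{1 + \rho_n^{2\alpha}|f(z_n + \rho_n\zeta)|^2},
\end{equation*}
which I will exploit throughout.

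For the ``if'' direction, I argue by contrapositive: assuming such sequences exist, I deduce that $f$ is not normal. First, local uniform convergence of $g_n$ on all of $\mathbb{C}$ forces $(1-|z_n|)/\rho_n \to \infty$, since $g_n$ is defined at $\zeta$ only when $z_n + \rho_n\zeta \in \mathbb{D}$. Next, the harmonic Weierstrass theorem applied to the Wirtinger derivatives $\partial g_n = h_n'$ and $\bar\partial g_n = \overline{k_n'}$ gives locally uniform convergence of $h_n'$ and $k_n'$ to the analogous derivatives of the limit, and therefore $g_n^\# \to g^\#$ locally uniformly. Evaluating at $\zeta = 0$ and using $g_n^\#(0) \to 1$ together with $\rho_n^\alpha f(z_n) \to g(0)$ yields the asymptotics $\rho_n^{\alpha+1}(|h'(z_n)| + |k'(z_n)|) \to 1 + |g(0)|^2$ and $|f(z_n)| \sim \rho_n^{-\alpha}|g(0)|$. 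A short case analysis splitting on whether $g(0) = 0$ and on the sign of $\alpha$ (using $(1-|z_n|)/\rho_n \to \infty$ and, when needed, a point $\zeta_0$ with $g(\zeta_0) \neq 0$) shows $(1 - |w_n|^2)f^\#(w_n) \to \infty$ for $w_n$ equal either to $z_n$ or to $z_n + \rho_n \zeta_0$, negating the normality of $f$.

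For the ``only if'' direction, I also work contrapositively. Non-normality of $f$ yields a sequence $w_n \in \mathbb{D}$ with $(1-|w_n|^2)f^\#(w_n) \to \infty$. Pull back to the unit disk via $F_n(\eta) = f(w_n + \tfrac{1-|w_n|}{2}\eta)$; then $F_n^\#(0) = \tfrac{1-|w_n|}{2}f^\#(w_n) \to \infty$, so the harmonic family $\{F_n\}$ on $|\eta| < 1$ fails Marty's criterion at the origin. I then invoke the harmonic analogue of Zalcman--Pang for families, constructed by the classical recipe: choose a maximizer $\eta_n$ of the function $\eta \mapsto (r^2 - |\eta|^2) F_n^\#(\eta)$ on a small disk, set $\sigma_n^{1+\alpha} = 1/F_n^\#(\eta_n)$, and verify by the standard two-line estimate (using the maximum property of $\eta_n$) that the rescaled sequence $\sigma_n^\alpha F_n(\eta_n + \sigma_n\zeta)$ has locally bounded spherical derivative and normalization $1$ at $\zeta = 0$. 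Compactness for harmonic mappings with locally bounded spherical derivative, together with the Weierstrass-type convergence of components used above, extracts a locally uniformly convergent subsequence with non-constant limit. Setting $z_n = w_n + \tfrac{1-|w_n|}{2}\eta_n$ and $\rho_n = \tfrac{1-|w_n|}{2}\sigma_n$ converts this back into the sequences demanded by the theorem.

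The main obstacle is the harmonic Zalcman--Pang lemma for families invoked in the ``only if'' direction: one must confirm both the maximum-based estimate that controls the rescaled spherical derivative uniformly on compact sets, and a Marty-type compactness statement for harmonic mappings guaranteeing a non-constant limit (the non-constancy following because $g^\#(0) = 1$ is preserved in the limit by the uniform convergence $g_n^\# \to g^\#$). The wider range $\alpha \in (-1, \infty)$, as opposed to the meromorphic range $(-1, 1)$, is precisely the benefit of working with harmonic mappings valued in $\mathbb{C}$: the denominator $1 + \rho_n^{2\alpha}|f|^2$ does not degenerate at ``poles'' because there are none, so the Pang exponent is restricted only by the condition $\alpha + 1 > 0$ needed to make $\rho_n^{\alpha+1}(|h'(z_n)|+|k'(z_n)|)$ a non-trivial blow-up factor.
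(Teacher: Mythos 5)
Your overall architecture --- a Pang--Zalcman rescaling for the direction ``$f$ not normal $\Rightarrow$ such sequences exist,'' followed by a Marty-type compactness lemma for harmonic families and Weierstrass convergence of the analytic parts --- is the same as the paper's, and several of your observations are correct (the necessity of $(1-|z_n|)/\rho_n\to\infty$, and the reason the exponent range widens to $(-1,\infty)$ in the absence of poles). The genuine gap is in the extremal construction itself. You maximize $\eta\mapsto(r^2-|\eta|^2)F_n^{\#}(\eta)$ and then set $\sigma_n^{1+\alpha}=1/F_n^{\#}(\eta_n)$; this is the $\alpha=0$ (Zalcman/Lohwater--Pommerenke) recipe with an exponent attached afterwards, and it does not deliver either of the two properties you need. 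With your normalization one computes
\begin{equation*}
g_n^{\#}(0)=\frac{\sigma_n^{1+\alpha}F_n^{\#}(\eta_n)\bigl(1+|F_n(\eta_n)|^2\bigr)}{1+\sigma_n^{2\alpha}|F_n(\eta_n)|^2}=\frac{1+|F_n(\eta_n)|^2}{1+\sigma_n^{2\alpha}|F_n(\eta_n)|^2},
\end{equation*}
which equals $1$ only if $\alpha=0$ or $F_n(\eta_n)=0$ and in general is not bounded away from $0$ and $\infty$; likewise the ``standard two-line estimate'' only yields $g_n^{\#}(\zeta)\lesssim\max(1,\sigma_n^{-2\alpha})$, which blows up when $\alpha>0$. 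So neither $g^{\#}(0)=1$, nor $g^{\#}\le g^{\#}(0)$, nor even normality of $\{g_n\}$ is secured by your recipe. The paper avoids this by building the $\alpha$-dependence into the quantity being maximized: it chooses $(t_n,z_n)$ with
\begin{equation*}
F_n(t_n,z_n)=\sup_{|z|<r_n}\frac{\bigl(1-|z|^2/r_n^2\bigr)^{1+\alpha}t_n^{1+\alpha}\bigl(1+|f(z)|^2\bigr)f^{\#}(z)}{1+\bigl(1-|z|^2/r_n^2\bigr)^{2\alpha}t_n^{2\alpha}|f(z)|^2}=1
\end{equation*}
and sets $\rho_n=(1-|z_n|^2/r_n^2)t_n$, whence $g_n^{\#}(0)=F_n(t_n,z_n)=1$ exactly and maximality gives $g_n^{\#}(\zeta)\le(1+\epsilon_n)^{1+\alpha}/(1-\epsilon_n)^{2\alpha}\to1$. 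This modified extremal function is the real content of the proof and is missing from your proposal.

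The other direction of your argument also has a hole at $\alpha>0$. At a point $\zeta_0$ with $g(\zeta_0)\neq0$ and $g^{\#}(\zeta_0)\neq0$, your asymptotics give $|f(z_n+\rho_n\zeta_0)|\sim\rho_n^{-\alpha}|g(\zeta_0)|$ and hence $f^{\#}(z_n+\rho_n\zeta_0)\sim C\rho_n^{\alpha-1}$, so that
\begin{equation*}
(1-|z_n+\rho_n\zeta_0|^2)\,f^{\#}(z_n+\rho_n\zeta_0)\;\approx\;C\,\rho_n^{\alpha}\cdot\frac{1-|z_n|}{\rho_n},
\end{equation*}
a product of a factor tending to $0$ with a factor tending to $\infty$: no contradiction with normality follows from the pointwise data you invoke, so your case analysis does not close this case. (The paper itself treats this direction only by setting $\alpha=0$ and citing Corollary~\ref{cor:deng}, which likewise does not address general $\alpha$; but that is a weakness of the paper rather than a validation of your route.)
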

We remark that if we take $\alpha=0$ in Theorem \ref{thm:zp}, then we recover, as a corollary, a result of Deng et al. \cite[Theorem 1]{deng}, which is a harmonic mapping analogue of the rescaling result of Lohwater and Pommerenke \cite[Theorem 1]{lohwater}.

\begin{corollary} \cite[Theorem 1]{deng}\label{cor:deng}
A non-constant mapping $f$ harmonic in $\mathbb{D}$ is normal if and only if there do not 
exist sequences $\{z_n\}\subset\mathbb{D},~\{\rho_n\},~\rho_n>0,~\rho_n\to 0$ as $n\to\infty$ such that the functions $f(z_n+ \rho_n \zeta)$ converges locally uniformly in $\mathbb{C}$ to a non-constant harmonic mapping.
\end{corollary}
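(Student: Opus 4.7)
My plan is to prove both implications by contradiction, adapting the classical Zalcman-Pang rescaling scheme to the harmonic setting. The computational core is the master identity
\[
g_n^{\#}(\zeta) = \frac{\rho_n^{\alpha+1}\bigl(1+|f(z_n+\rho_n\zeta)|^2\bigr)\, f^{\#}(z_n+\rho_n\zeta)}{1 + \rho_n^{2\alpha}\,|f(z_n+\rho_n\zeta)|^2},
\]
obtained by writing $f = h + \bar{k}$ and defining $H_n(\zeta) := \rho_n^{\alpha} h(z_n+\rho_n\zeta)$ and $K_n(\zeta) := \rho_n^{\alpha} k(z_n+\rho_n\zeta)$, whence $g_n = H_n + \overline{K_n}$ and $|H_n'|+|K_n'| = \rho_n^{\alpha+1}(|h'|+|k'|) = \rho_n^{\alpha+1}(1+|f|^2)f^{\#}$ evaluated at $z_n+\rho_n\zeta$.

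For the direction ``rescaling exists $\Rightarrow f$ not normal'', suppose for contradiction that $M := \sup_{z\in\mathbb{D}}(1-|z|^2)f^{\#}(z) < \infty$. Local uniform convergence of $g_n$ on all of $\mathbb{C}$ forces $g_n$ to be defined on disks of arbitrarily large radius, hence $\rho_n/(1-|z_n|) \to 0$. An alternative way to see the contradiction is via the fact that $f$ normal makes the family $\{f\circ T_z : z \in \mathbb{D}\}$ (with $T_z$ the Möbius automorphism sending $0$ to $z$) a normal family on $\mathbb{D}$; passing to a convergent subsequence $f \circ T_{z_n} \to F$ and writing $f(z_n+\rho_n\zeta) = (f \circ T_{z_n})(\eta_n(\zeta))$ with $\eta_n(\zeta) \to 0$, a case analysis on the behavior of $|f(z_n)|$ and the sign of $\alpha$ shows that $g_n(\zeta)=\rho_n^{\alpha}(f\circ T_{z_n})(\eta_n(\zeta))$ either tends to a constant or diverges, contradicting $g_n \to g$ non-constant harmonic with $g^{\#}(0) = 1$.

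For the main direction ``$f$ not normal $\Rightarrow$ rescaling exists'', I would invoke Pang's refinement of Zalcman's maximality argument. Choose $w_n \in \mathbb{D}$ with $(1-|w_n|^2)f^{\#}(w_n) \to \infty$, pick a radius $r_n \in (0, 1-|w_n|)$, and on $\overline{D(w_n,r_n)}$ let $z_n$ be a point where the weighted spherical derivative $\psi_n(z) := (r_n-|z-w_n|)^{\alpha+1}\, f^{\#}(z)$ attains its maximum; the exponent $\alpha+1$ is calibrated precisely so that the $\rho_n^{\alpha}$ prefactor in $g_n$ is absorbed correctly in the master identity. Setting $\rho_n$ so that $g_n^{\#}(0) \to 1$, the maximality of $z_n$ combined with a standard manipulation yields $g_n^{\#}(\zeta) \le 1+o(1)$ uniformly on compact sets, together with $(r_n-|z_n-w_n|)/\rho_n \to \infty$, so the domains of definition of $g_n$ exhaust $\mathbb{C}$. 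A harmonic analog of Marty's compactness theorem, combined with Weierstrass/Hurwitz-type limits applied separately to $H_n$ and $K_n$, then produces a subsequence of $\{g_n\}$ converging locally uniformly in $\mathbb{C}$ to a non-constant harmonic mapping $g$ with $g^{\#} \le g^{\#}(0) = 1$.

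The main obstacle is the harmonic adaptation of the Pang construction. The prefactor $\rho_n^{\alpha}$ couples the analytic and anti-analytic parts of the decomposition with the denominator $1+\rho_n^{2\alpha}|f|^2$ in the spherical derivative, so the balance among these must be maintained uniformly across the full range $\alpha \in (-1,\infty)$, in particular allowing for the possibility that $|f(z_n)|$ may blow up or tend to zero depending on the sign of $\alpha$. Verifying that the limit is a bona fide harmonic mapping (and not merely continuous) also requires passing to the limit in $H_n$ and $K_n$ separately and checking that they assemble into a valid canonical decomposition of $g$.
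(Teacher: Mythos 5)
Your proposal is essentially correct, but it re-proves the general rescaling lemma from scratch, whereas the paper does no independent work for this statement: Corollary \ref{cor:deng} is the result of Deng et al.\ \cite[Theorem 1]{deng}, and the paper simply observes that it is the case $\alpha=0$ of Theorem \ref{thm:zp}. What you have sketched is, in effect, the proof of Theorem \ref{thm:zp} itself. For the corollary only the case $\alpha=0$ is needed, and there every difficulty you flag in your final paragraph evaporates: the prefactor $\rho_n^{\alpha}$ disappears, your master identity collapses to $g_n^{\#}(\zeta)=\rho_n f^{\#}(z_n+\rho_n\zeta)$, the weight $(r_n-|z-w_n|)f^{\#}(z)$ is the classical Lohwater--Pommerenke one, and no case analysis on the sign of $\alpha$ or on the size of $|f(z_n)|$ arises. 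Two points of comparison are worth recording. First, for general $\alpha$ the paper does not maximize $(r_n-|z-w_n|)^{1+\alpha}f^{\#}$ directly as you propose; it introduces the two-variable auxiliary function $F_n(t,z)$ and locates $(t_n,z_n)$ by the intermediate value theorem (the Chen--Gu device), precisely because the denominator $1+\rho_n^{2\alpha}|f|^2$ in the master identity obstructs the naive maximization --- this is exactly the ``main obstacle'' you name but do not resolve; for $\alpha=0$ it is moot. Second, in the easy direction your first argument should simply be completed rather than replaced: from $\rho_n/(1-|z_n|)\to 0$ and normality one gets $g_n^{\#}(\zeta)=\rho_n f^{\#}(z_n+\rho_n\zeta)\le M\rho_n/(1-|z_n+\rho_n\zeta|^2)\to 0$ locally uniformly, hence $g^{\#}\equiv 0$ and $g$ is constant, a contradiction. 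The ``alternative way'' via $f\circ T_{z_n}$ with an unspecified case analysis is the one genuinely soft spot in your write-up; it is unnecessary here and, as stated, is not a proof.
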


\medskip

Deng et al. \cite[Lemma 1]{deng} also proved an analogue of Marty's theorem \cite[Theorem 4]{marty} for a class of harmonic mappings which states that a family $\mathcal{F}$ of harmonic mappings $f=h+\bar{g}$ in $\mathbb{D}$ is normal if the corresponding family of spherical derivatives $\{f^\#: f\in\mathcal{F}\}$ is locally uniformly bounded. The converse holds if the real part of $h(z)g(z)>0$ (see \cite[Lemma 2.1]{bohra}). Therefore, for a harmonic mapping $f=h+\bar{g}$ in $\mathbb{D},$ the boundedness of the spherical derivative is sufficient to ensure normality of $f$ in $\mathbb{D}.$ As an application of Theorem \ref{thm:zp}, we obtain the following result: 

\begin{theorem}\label{thm:g}
    Let $f=h+\bar{g}$ be a harmonic mapping in $\mathbb{D}$ and let $\epsilon>0$ be a real number. If, for each $z\in\mathbb{D},$ $f^{\#}(z)>\epsilon,$ then $f$ is a normal harmonic mapping.
\end{theorem}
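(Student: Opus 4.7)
The plan is to argue by contradiction, invoking Theorem \ref{thm:zp} with a fixed exponent $\alpha > 1$; for concreteness, take $\alpha = 2$. Suppose $f = h + \bar g$ is not normal in $\mathbb{D}$. Then Theorem \ref{thm:zp} furnishes sequences $\{z_n\} \subset \mathbb{D}$ and $\rho_n > 0$ with $\rho_n \to 0$ such that the rescaled mappings
\begin{equation*}
g_n(\zeta) := \rho_n^{\alpha} f(z_n + \rho_n \zeta)
\end{equation*}
converge locally uniformly on $\mathbb{C}$ to a non-constant harmonic mapping $\phi$ satisfying $\phi^{\#}(\zeta) \le \phi^{\#}(0) = 1$.

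A direct computation of the Wirtinger derivatives of $g_n$ yields
\begin{equation*}
|\partial_{\zeta} g_n(\zeta)| + |\partial_{\bar\zeta} g_n(\zeta)| = \rho_n^{\alpha+1}\bigl(|h'(z_n + \rho_n \zeta)| + |g'(z_n + \rho_n \zeta)|\bigr).
\end{equation*}
Inserting the hypothesis $f^{\#}(z) > \epsilon$, which reads $|h'(z)| + |g'(z)| > \epsilon\bigl(1 + |f(z)|^2\bigr)$, together with the rescaling identity $|f(z_n + \rho_n \zeta)|^2 = \rho_n^{-2\alpha}|g_n(\zeta)|^2$, produces the lower bound
\begin{equation*}
|\partial_{\zeta} g_n(\zeta)| + |\partial_{\bar\zeta} g_n(\zeta)| > \epsilon\, \rho_n^{\alpha+1} + \epsilon\, \rho_n^{1-\alpha}\,|g_n(\zeta)|^2.
\end{equation*}
The crucial feature is that $1 - \alpha < 0$, so $\rho_n^{1-\alpha} \to +\infty$ as $n \to \infty$.

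Since $\phi$ is non-constant, it is not identically zero, so I can fix $\zeta_0 \in \mathbb{C}$ with $\phi(\zeta_0) \neq 0$. Then $|g_n(\zeta_0)|^2 \to |\phi(\zeta_0)|^2 > 0$, driving the right-hand side at $\zeta_0$ to $+\infty$. On the other hand, locally uniform convergence of harmonic functions propagates to locally uniform convergence of their Wirtinger derivatives (a consequence of elliptic regularity, or equivalently of the Weierstrass theorem applied to the analytic and anti-analytic components), so the left-hand side at $\zeta_0$ converges to the finite quantity $|\partial_{\zeta}\phi(\zeta_0)| + |\partial_{\bar\zeta}\phi(\zeta_0)|$. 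This contradiction forces $f$ to be normal. The main conceptual step is the selection $\alpha > 1$: this is precisely what amplifies the hypothesis $f^{\#} > \epsilon$, via the factor $\rho_n^{1-\alpha}$, into a divergent lower bound on the derivatives of $g_n$. Values $\alpha \le 1$ would degenerate to trivial information in the limit, so the whole argument hinges on exploiting the freedom in the exponent of Theorem \ref{thm:zp}.
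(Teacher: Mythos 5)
Your proof is correct and follows essentially the same route as the paper's: both invoke Theorem \ref{thm:zp} with $\alpha=2$, evaluate at a point where the non-constant limit mapping is nonzero, and derive a contradiction from the divergent factor $\rho_n^{1-\alpha}\to\infty$ that the hypothesis $f^{\#}>\epsilon$ produces. The only cosmetic difference is that the paper packages the finite upper bound as the spherical-derivative inequality $F_n^{\#}(a)\le 1$, whereas you obtain it directly from locally uniform convergence of the Wirtinger derivatives; the underlying mechanism is identical.
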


Theorem \ref{thm:g} essentially demonstrates that normality of a harmonic mapping $f$ in $\mathbb{D}$ can be deduce even when the spherical derivative of $f$ is bounded away from zero. This is in consonance with the result of Grahl and Nevo \cite[Theorem 1]{grahl} for meromorphic functions. Thus, Theorem \ref{thm:g} serves as the harmonic mapping analogue of the result of Grahl and Nevo.

\medskip

Following the work of Aulaskari and R\"{a}tty\"{a} \cite{aulaskari-1} on $\varphi$-normal meromorphic functions, Bohra et al. \cite{bohra} recently  extended the concept of $\varphi$-normal functions to planar harmonic mappings. Our next objective is to make further explorations in the newly established concept of $\varphi$-normal harmonic mappings. 

\medskip

We begin by recalling the definition of a $\varphi$-normal function as introduced by Aulaskari and  R\"{a}tty\"{a} \cite[Definition 1]{aulaskari-1}.  
An function $\varphi:[0,1) \rightarrow(0, \infty)$ is termed {\it smoothly increasing} if

\begin{equation}\label{eq:phi}
\varphi(r)(1-r) \rightarrow \infty \quad \text { as } r \rightarrow 1^{-}
\end{equation}
and
\begin{equation}\label{eq: phi cnvrgnc}
\mathcal{R}_{a}(z):=\frac{\varphi(|a+z / \varphi(|a|)|)}{\varphi(|a|)} \rightarrow 1 \quad \text { as }|a| \rightarrow 1^{-}
\end{equation}
uniformly on compact subsets of $\mathbb{C}$. Given such a $\varphi$, a 
meromorphic function $f$ in $\mathbb{D}$ is said to be $\varphi$-normal if
$$
\sup _{z \in \mathbb{D}} \frac{f^{\#}(z)}{\varphi(|z|)}<\infty.
$$
Evidently, the class of $\varphi$-normal functions in 
$\mathbb{D}$ encompasses a broader set than the class of normal functions in $\mathbb{D}.$ We now introduce the definition of $\varphi$-normal harmonic mappings as introduced by Bohra et al. \cite{bohra}. 

\begin{definition}\cite[Definition 1.2]{bohra}
A harmonic mapping $f=h+\overline{g}$ in $\mathbb{D}$ is called $\varphi$-normal if 
\begin{equation}\label{eq: varphi h normal}
\sup_{z\in\mathbb{D}}\frac{f^{\#} (z)}{\varphi(|z|)}<\infty,
\end{equation}
 where $f^{\#} (z)$ is determined by \eqref{eq: spherical derivative}.
 \end{definition}
 It is apparent that the class of $\varphi$-normal harmonic mappings is broader then the class of normal harmonic mappings.

 \medskip

 Recall the well-known Lappan's five-point theorem \cite[Theorem~1]{lappan} which states that a meromorphic function 
$f$ is normal in $\mathbb{D}$ if there exists a set $E$ of five distinct points such that $\sup\{(1-|z|^2)f^\#(z): z\in f^{-1}(E)\}$ is bounded above.  Recently, Bohra et al. \cite[Theorem 1.5]{bohra} extended the Lappan's five-point theorem for $\varphi$-normal harmonic mappings as: a harmonic mapping $f=h+\bar{g}$ in $\mathbb{D}$ is normal 
if there is a set $E$ of five distinct complex number such that $$\sup\left\{\frac{f^\#(z)}{\varphi(|z|)}: z\in f^{-1}(E)\right\}<\infty.$$  
%%%% 
We extend this result in a more general setting as follows: 

\begin{theorem}\label{thm:lappan}
Let $f$ is a sense-preserving harmonic 
mapping in $\mathbb{D}$ and let $P$ be a non-constant polynomial of degree $m\geq 1$ such that the coefficients of $P$ are non-negative real numbers. If there exists a set E of five distinct complex numbers such that 
\begin{equation}\label{eq:lappan-1}
\sup\left\{\frac{P(f^{\#}(z))}{\varphi(|z|)}: z\in f^{-1}(E)\right\}<\infty,
\end{equation}
then $f$ is a $\varphi$-normal harmonic function.
\end{theorem}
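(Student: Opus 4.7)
The plan is to argue by contradiction by adapting the $\varphi$-normal variant of the harmonic Zalcman--Pang renormalization underlying Theorem~\ref{thm:zp}. Suppose $f$ is not $\varphi$-normal; then there is a sequence $\{z_n\}\subset\mathbb{D}$ with $M_n:=f^\#(z_n)/\varphi(|z_n|)\to\infty$, and necessarily $|z_n|\to 1^-$. Set $\rho_n:=1/f^\#(z_n)$, so that $\rho_n\to 0$ and, crucially, $\rho_n\,\varphi(|z_n|)=1/M_n\to 0$. The rescalings $g_n(\zeta):=f(z_n+\rho_n\zeta)$ inherit sense-preservation from $f$ and satisfy $g_n^\#(0)=1$. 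After the standard Zalcman-type adjustment of $(z_n,\rho_n)$ to force $g_n^\#(\zeta)\leq 1+o(1)$ on expanding disks, and passing to a subsequence, the $\varphi$-normal analogue of Theorem~\ref{thm:zp} produces a locally uniform limit $g_n\to g$ on $\mathbb{C}$, where $g$ is a non-constant sense-preserving harmonic mapping with $g^\#\leq g^\#(0)=1$.

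Next, I invoke a Picard-type property for non-constant sense-preserving harmonic mappings on $\mathbb{C}$ with bounded spherical derivative, as already used by Bohra et al.~\cite{bohra} in the degree-one case: $g$ must assume at least three of the five values in $E$. Fix any such $a\in E$ with $g(\zeta_a)=a$. Hurwitz's theorem for harmonic mappings produces $\zeta_n\to\zeta_a$ with $g_n(\zeta_n)=a$, so $w_n:=z_n+\rho_n\zeta_n\in f^{-1}(a)\subseteq f^{-1}(E)$. Hypothesis~\eqref{eq:lappan-1} then gives $P(f^\#(w_n))\leq K\,\varphi(|w_n|)$ for some $K>0$, and since $P$ has non-negative coefficients with positive leading coefficient $a_m$, the bound $P(t)\geq a_m t^m$ for $t\geq 0$ yields
$$
f^\#(w_n)\leq (K/a_m)^{1/m}\,\varphi(|w_n|)^{1/m}.
$$

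The quantity $\rho_n\zeta_n\,\varphi(|z_n|)=\zeta_n/M_n\to 0$ stays in a compact subset of $\mathbb{C}$, so the smoothly increasing condition~\eqref{eq: phi cnvrgnc} (applied with $a=z_n$) gives $\varphi(|w_n|)/\varphi(|z_n|)\to 1$. Consequently
$$
g_n^\#(\zeta_n)=\rho_n f^\#(w_n)\leq (K/a_m)^{1/m}(1+o(1))\,\rho_n\,\varphi(|z_n|)^{1/m}=\frac{(K/a_m)^{1/m}(1+o(1))}{M_n\,\varphi(|z_n|)^{(m-1)/m}},
$$
which tends to $0$ for every $m\geq 1$: when $m=1$ the exponent on $\varphi(|z_n|)$ is zero but $1/M_n\to 0$, and when $m>1$ both factors in the denominator diverge. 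Passing to the limit yields $g^\#(\zeta_a)=0$.

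Thus the non-constant mapping $g$ has $g^\#$ vanishing at a preimage of each of (at least) three distinct values of $E$, while $g^\#\leq 1$ throughout $\mathbb{C}$. I expect the main obstacle to be this final step: extracting a contradiction between the non-constancy of $g$ and the prescribed vanishing of $g^\#$ at preimages of three target values. This is the harmonic counterpart of the Nevanlinna-theoretic ramification argument at the heart of the classical Lappan five-point theorem (where five is exactly the smallest number of values for which the count $(q/2-2)T(r)\leq S(r)$ becomes a non-trivial constraint). I plan to settle it by adapting a harmonic Picard/Lappan-type lemma in the spirit of \cite{bohra}, either applied directly to $g$ and the three distinguished values, or---using the canonical decomposition $g=H+\overline{G}$ and the analytic dilatation of $g$---by reducing to the meromorphic Lappan theorem. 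The resulting contradiction forces $f$ to be $\varphi$-normal, completing the proof.
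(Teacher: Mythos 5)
Your renormalization and the computation showing $g_n^{\#}(\zeta_n)\to 0$ at preimages of points of $E$ are correct, and this is essentially the paper's argument run in the contrapositive: the paper rescales via the Lohwater--Pommerenke-type lemma for $\varphi$-normal harmonic mappings (Theorem 1.3 of \cite{bohra}, quoted in Section 2), fixes a value $a\in E$ at which the limit map $F$ has a \emph{simple} zero, and shows that along the corresponding preimages $w_n$ one has $P(f^{\#}(w_n))/\varphi(|w_n|)\to\infty$, violating \eqref{eq:lappan-1} --- the same use of $P(t)\ge a_m t^m$ as yours, with the inequality read in the other direction. One cosmetic point: your detour through a ``$\varphi$-normal analogue of Theorem \ref{thm:zp}'' with the normalization $g^{\#}\le g^{\#}(0)=1$ is an unproved auxiliary lemma and is not needed; the un-normalized rescaling already yields a non-constant locally uniform limit, and your subsequent computation never actually uses $g^{\#}\le 1$ or $g^{\#}(0)=1$.

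The step you defer as ``the main obstacle'' is, however, the one place where your write-up as literally stated would fail: the vanishing of $g^{\#}$ at \emph{one} preimage of each of three values of $E$ contradicts nothing, since a non-constant harmonic mapping may well have isolated critical points. What rescues the argument is that your estimate applies to \emph{every} zero $\zeta_a$ of $g-a$ and to \emph{every} $a\in E$, so each value of $E$ attained by $g$ has only multiple zeros of $g-a$ (i.e.\ $H'$ and $G'$ both vanish there), and any omitted value has this property vacuously. The contradiction then comes precisely from Lemma \ref{lem:deng} (Lemma 3 of \cite{deng}), already stated in the paper's preliminaries: a non-constant sense-preserving harmonic mapping of $\mathbb{C}$ admits at most four values $a$ for which all zeros of $g-a$ are multiple, whereas you would produce five. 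Once you upgrade ``a preimage of each of three values'' to ``every preimage of each of the five values'' and invoke Lemma \ref{lem:deng}, your proof closes and coincides in substance with the paper's.
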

Obviously, if we chose $P(z)=z$ in Theorem \ref{thm:lappan}, then Theorem \ref{thm:lappan} 
 coincides with the result of Bohra et al. \cite[Theorem 1.5]{bohra}. 

\medskip

Inspired by the idea of extended spherical derivative of a meromorphic functions by Lappan \cite[p. 307]{lappan-2}, we introduce the extended spherical derivative of a harmonic mapping as follows:\\
Let $k$ be a positive integer. Then the extended spherical derivative of a harmonic mapping $f=h+\bar{g}$ in $\mathbb{D}$ is given by 
\begin{equation}\label{eq:esp}
    f^{\#(k)}(z):=\frac{|h^{(k)}(z)|+|g^{(k)}(z)|}{1+|f(z)|^{k+1}}.
\end{equation}
While extended spherical derivatives of meromorphic functions have been extensively used in the study of normal families and normal meromorphic functions (see \cite{chen, lappan-2, xu, yang}), in this paper, we aim to apply the concept of the extended spherical derivative of a harmonic mapping to the study of 
 $\varphi$-normal harmonic mappings. 

\medskip

The following theorem provides a necessary condition for $\varphi$-normal harmonic mappings.

\begin{theorem}\label{thm:d}
 Let $\varphi:[0,1)\rightarrow (0,\infty)$ be a smoothly increasing function 
such that $1/\varphi$ is convex. Let $k$ be a positive integer and let $f$ be a harmonic mapping in $\mathbb{D}.$ If $f$ is $\varphi$-normal, then 
\begin{equation}\label{eq:d-1}
    \sup\left\{\frac{f^{\#(k)}(z)}{\varphi(|z|)^k}: z\in\mathbb{D}\right\}<\infty.
\end{equation}
\end{theorem}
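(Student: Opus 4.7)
The plan is to argue by contradiction via a Zalcman-Pang-type rescaling, mirroring the classical meromorphic argument but using the harmonic analogue of Marty's theorem due to Deng et al.~\cite[Lemma~1]{deng}. Assume for contradiction that $\sup_{z\in\mathbb{D}} f^{\#(k)}(z)/\varphi(|z|)^k=\infty$, and choose $z_n\in\mathbb{D}$ with $f^{\#(k)}(z_n)/\varphi(|z_n|)^k\to\infty$. Since $h^{(k)}$ and $g^{(k)}$ are bounded on compact subsets of $\mathbb{D}$ and $\varphi$ is positive there, this forces $|z_n|\to 1^{-}$. Put $\rho_n:=1/\varphi(|z_n|)$; by~\eqref{eq:phi}, $\rho_n\to 0$ and $(1-|z_n|)/\rho_n=(1-|z_n|)\varphi(|z_n|)\to\infty$, so $F_n(\zeta):=f(z_n+\rho_n\zeta)$ is defined on every compact subset of $\mathbb{C}$ for all large $n$.

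I would then write the canonical decomposition as $F_n=H_n+\overline{G_n}$ with $G_n(0)=0$, i.e.\ $H_n(\zeta)=h(z_n+\rho_n\zeta)+\overline{g(z_n)}$ and $G_n(\zeta)=g(z_n+\rho_n\zeta)-g(z_n)$. A direct computation together with the $\varphi$-normality of $f$ and condition~\eqref{eq: phi cnvrgnc} yields
$$F_n^{\#}(\zeta)=\rho_n f^{\#}(z_n+\rho_n\zeta)\leq C\,\mathcal{R}_{z_n}(\zeta),\qquad F_n^{\#(k)}(0)=\frac{f^{\#(k)}(z_n)}{\varphi(|z_n|)^k}.$$
Since $\mathcal{R}_{z_n}\to 1$ locally uniformly on $\mathbb{C}$, $\{F_n^{\#}\}$ is locally uniformly bounded there, while by construction $F_n^{\#(k)}(0)\to\infty$. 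Applying the harmonic Marty theorem~\cite[Lemma~1]{deng}, $\{F_n\}$ is a normal family on $\mathbb{C}$, and a subsequence converges locally uniformly in the spherical metric to a limit $F$.

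If $F$ is a finite harmonic mapping on $\mathbb{C}$, the convergence is Euclidean on compacts; the normalization $G_n(0)=0$ uniquely pins down the decompositions, and Weierstrass's theorem applied to the holomorphic parts $H_n$ and $G_n$ gives $H_n^{(k)}(0)\to H^{(k)}(0)$, $G_n^{(k)}(0)\to G^{(k)}(0)$ and $F_n(0)\to F(0)$. Therefore $F_n^{\#(k)}(0)\to F^{\#(k)}(0)<\infty$, contradicting $F_n^{\#(k)}(0)\to\infty$.

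The main obstacle is the remaining case, in which the subsequential limit is the constant~$\infty$. In the meromorphic version of this result one disposes of it via the symmetry $F\mapsto 1/F$, which has no counterpart for harmonic mappings. The strategy is to exploit the bound $F_n^{\#}\leq M$, the identity $|F_n(0)|=|H_n(0)|$ guaranteed by $G_n(0)=0$, and the convexity of $1/\varphi$---which enforces sufficient regularity of $\varphi$ near the boundary---to obtain Cauchy-type estimates showing $|H_n^{(k)}(0)|+|G_n^{(k)}(0)|=o(|F_n(0)|^{k+1})$ as $|F_n(\cdot)|\to\infty$, and hence $F_n^{\#(k)}(0)\to 0$, again a contradiction. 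This is the step where I expect the convexity hypothesis on $1/\varphi$ to play its essential role; the rest of the argument is a fairly direct adaptation of the meromorphic proof.
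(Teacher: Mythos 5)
Your argument follows the same route as the paper's proof: rescale at the bad points $z_n$ by $1/\varphi(|z_n|)$, show the rescaled family $\{F_n\}$ is normal, pass to a locally uniform limit $F$, and read off the contradiction from $F_n^{\#(k)}(0)=f^{\#(k)}(z_n)/\varphi(|z_n|)^k\to F^{\#(k)}(0)<\infty$. Your derivation of normality is in fact a little more self-contained than the paper's: you obtain the locally uniform bound $F_n^{\#}(\zeta)\le C\,\mathcal{R}_{z_n}(\zeta)$ directly from $\varphi$-normality and \eqref{eq: phi cnvrgnc} and then invoke the harmonic Marty theorem \cite[Lemma 1]{deng}, whereas the paper simply cites Lemma \ref{lem:bora} (i.e.\ \cite[Theorem 1.4]{bohra}), which is where the convexity hypothesis on $1/\varphi$ is consumed. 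The finite-limit branch of your argument is correct and coincides with the paper's proof.

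The gap is the branch you yourself flag: a subsequential limit that is identically $\infty$. You only sketch a strategy there, and as sketched it does not go through. From $F_n^{\#}\le M$ one gets $|H_n'|+|G_n'|\le M(1+|F_n|^2)$, i.e.\ $\bigl|\nabla\arctan|F_n|\bigr|\le M$; since $\arctan$ is bounded, this gives no control of $\sup_{|\zeta|\le r}|F_n(\zeta)|$ by any power of $|F_n(0)|$ once $|F_n(0)|$ is large (the pointwise bound it yields degenerates outside a disk of radius comparable to $1/(M|F_n(0)|)$). Hence the Cauchy estimate for $H_n^{(k)}(0)$, which is controlled by $\sup_{|\zeta|\le r}|H_n'|\lesssim 1+\sup_{|\zeta|\le r}|F_n|^2$, cannot be compared to $|F_n(0)|^{k+1}$ in the way you propose, and the convexity of $1/\varphi$ does not enter at this point in the paper (it is used only as a hypothesis of Lemma \ref{lem:bora}). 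The paper sidesteps the issue entirely by reading the conclusion of Lemma \ref{lem:bora} as saying that the rescaled sequence converges locally uniformly to a finite harmonic mapping $F=H+\overline{G}$, so the $\infty$ case never arises; to complete your proof you should either quote that lemma in the same way or give a genuine argument excluding $|F_n|\to\infty$ locally uniformly, which your current sketch does not supply.
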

%%%%%%%%%%%
In the following result, we employ the idea of extended spherical derivative to obtain a sufficient condition for a harmonic mapping in $\mathbb{D}$ to be $\varphi$-normal. 

\begin{theorem}\label{thm:y}
Let $k$ be a positive integer and let $f=h+\bar{g}$ be a sense-preserving harmonic mapping in $\mathbb{D}$ such that 
\begin{equation}\label{eq:y-1}
\sup\{f^{(i)}(z)=h^{(i)}(z)+\overline{g^{(i)}(z)}: z\in f^{-1}(\{0\}),~ i=0,\ldots, k-1\}<\infty.
\end{equation}
If there exists a set $E$ of $k+4$ distinct points in $\mathbb{C}$ such that 
\begin{equation}\label{eq:y-2}
\sup\left\{\frac{f^{\#(k)}(z)}{\varphi(|z|)^k}: z\in f^{-1}(E)\right\}<\infty,
\end{equation}
then $f$ is a $\varphi$-normal harmonic mapping.
\end{theorem}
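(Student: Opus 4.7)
The plan is to argue by contradiction. Assume $f$ satisfies \eqref{eq:y-1} and \eqref{eq:y-2} but is not $\varphi$-normal, and construct a non-constant entire harmonic mapping $F:\mathbb{C}\to\mathbb{C}$ whose value distribution contradicts a Picard-type theorem for harmonic mappings in the plane. The first move is a Zalcman--Pang rescaling adapted to the $\varphi$-normal setting (an extension of Theorem~\ref{thm:zp} in which the smoothly increasing weight enters through condition \eqref{eq: phi cnvrgnc}). Since $\sup_{z\in\mathbb{D}} f^\#(z)/\varphi(|z|)=\infty$, one can choose $z_n\in\mathbb{D}$ with $|z_n|\to 1^-$ and $\rho_n>0$ calibrated so that $\rho_n f^\#(z_n)\to 1$, whence $\rho_n\varphi(|z_n|)\to 0$, and such that
$$F_n(\zeta):= f(z_n+\rho_n\zeta)\longrightarrow F(\zeta)=h_F(\zeta)+\overline{g_F(\zeta)}$$
locally uniformly in $\mathbb{C}$, where $F$ is a non-constant entire harmonic mapping with $F^\#\le F^\#(0)=1$. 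Condition \eqref{eq: phi cnvrgnc} is exactly what makes $\varphi(|z_n+\rho_n\zeta|)/\varphi(|z_n|)\to 1$ uniformly on compacta, so that the $\varphi$-weight becomes asymptotically invisible.

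Next I would translate the two hypotheses into vanishing conditions on $F$. Fix $a\in E$ and let $\zeta_0$ satisfy $F(\zeta_0)=a$. Hurwitz's theorem supplies $\zeta_n\to\zeta_0$ with $F_n(\zeta_n)=a$, so that $z_n+\rho_n\zeta_n\in f^{-1}(E)$. The chain-rule identity $F_n^{\#(k)}(\zeta_n)=\rho_n^k\, f^{\#(k)}(z_n+\rho_n\zeta_n)$ combined with \eqref{eq:y-2} and $\rho_n\varphi(|z_n|)\to 0$ forces $F^{\#(k)}(\zeta_0)=0$, and hence $h_F^{(k)}(\zeta_0)=g_F^{(k)}(\zeta_0)=0$. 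When $0\in E$, the analogous rescaling $F_n^{(i)}(\zeta_n)=\rho_n^i f^{(i)}(z_n+\rho_n\zeta_n)$ applied at zeros of $F_n$, together with \eqref{eq:y-1}, gives $F^{(i)}(\zeta_0)=0$ for $i=0,\ldots,k-1$ at each zero of $F$, so that $F^{(i)}(\zeta_0)=0$ for all $i=0,\ldots,k$ at a zero. In either case each of the $k+4$ values in $E$ turns out to be a totally ramified value of $F$ of order at least $k+1$ in the harmonic sense.

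The final step is the Picard/Nevanlinna contradiction: a non-constant entire harmonic mapping with bounded spherical derivative cannot possess $k+4$ totally ramified values of order $k+1$ or more, by the harmonic analogue of the classical meromorphic obstruction (Nevanlinna's second main theorem plus the ramification estimate). This rules out the existence of $F$ and closes the argument.

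I expect the main obstacle to arise in the ramification analysis of the middle step. In the harmonic setting, hypothesis \eqref{eq:y-1} only controls the modulus of the sum $h^{(i)}+\overline{g^{(i)}}$ rather than $|h^{(i)}|$ and $|g^{(i)}|$ individually, while the harmonic Picard-type input one wants to use typically demands separate vanishing. Verifying that the combination of \eqref{eq:y-1}, \eqref{eq:y-2}, and the sense-preserving property $J_f>0$ genuinely forces a ramification of $F-a$ of order $k+1$ for every $a\in E$, and doing so in a form that interfaces correctly with the available harmonic Nevanlinna theory, is the delicate heart of the argument. A secondary technical concern is ensuring that the decomposition $F_n=h_n+\overline{g_n}$ converges factor by factor rather than only through the sum, which one arranges by normalising $g_n$ to vanish at a fixed base point.
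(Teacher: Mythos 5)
Your opening two-thirds track the paper's proof closely: the rescaling via the Lohwater--Pommerenke-type lemma for $\varphi$-normal harmonic mappings, Hurwitz's theorem to produce $a$-points of $F_n$ converging to $a$-points of the limit $F=H+\overline{G}$, and the translation of \eqref{eq:y-1} and \eqref{eq:y-2} into vanishing statements on $F$ (the paper's Claims 1 and 2). The gap is in your endgame. What \eqref{eq:y-2} actually yields at an $a$-point $\zeta_0$ of $F$ with $a\in E$ is only $H^{(k)}(\zeta_0)=G^{(k)}(\zeta_0)=0$; this does \emph{not} make $a$ a totally ramified value of order $k+1$, because the intermediate derivatives $H'(\zeta_0),\dots,H^{(k-1)}(\zeta_0)$ need not vanish (the lower-order vanishing from \eqref{eq:y-1} is available only at zeros of $F$, and $0$ need not lie in $E$). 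You flag this yourself as the ``delicate heart,'' and indeed it does not go through. A further warning sign is the cardinality: if all $k+4$ values were genuinely totally ramified of order $\geq k+1$, the classical ramification/defect relation for entire functions would already be contradicted by three such values, so the number $k+4$ would be unexplained. That count is tuned to a different estimate.

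The paper closes the argument without any ``harmonic Nevanlinna theory.'' Since $F$ is a sense-preserving harmonic mapping of all of $\mathbb{C}$, its dilatation $G'/H'$ is an entire function of modulus less than $1$, hence a constant $b$ with $|b|<1$ by Liouville's theorem; consequently $F(\zeta)=a$ is equivalent to $H(\zeta)=a^*$ for an explicit affine image $a^*$ of $a$, with distinct $a$ giving distinct $a^*$. Every $a^*$-point of $H$ with $a^*\in E^*$ is then a zero of $H^{(k)}$, so $\sum_{a^*\in E^*}\overline{N}(r,1/(H-a^*))\leq N(r,1/H^{(k)})\leq T(r,H^{(k)})+O(1)\leq (k+1)T(r,H)+S(r,H)$ by Lemma \ref{lem:n-3}, while the second fundamental theorem (Lemma \ref{lem:n-2}) applied to the $k+4$ values in $E^*$ gives $(k+2)T(r,H)\leq\sum_{a^*\in E^*}\overline{N}(r,1/(H-a^*))+S(r,H)$. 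Comparing the two yields the contradiction, and this is precisely why $|E|=k+4$ is the right cardinality. Hypothesis \eqref{eq:y-1} enters only to force the zeros of $F$ (hence of $H$ minus a constant) to have multiplicity at least $k$, which rules out $H^{(k)}\equiv 0$. To repair your proposal, replace the appeal to a harmonic Picard-type ramification theorem by this Liouville reduction to the single entire function $H$ followed by the classical counting above.
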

Notice that if $f=h+\bar{g}$ has zeros of multiplicity at least $k,$ then the condition \eqref{eq:y-1} holds naturally. As an immediate consequence of Theorem \ref{thm:y}, we obtain the following result which is the  Lappan's five-point theorem for $\varphi$-normal harmonic maappings.
\begin{corollary}\cite[Theorem 1.5]{bohra}
Let $f=h+\bar{g}$ be a sense-preserving harmonic mapping in $\mathbb{D}.$ If there exists a set $E$ of five distinct complex numbers such that 
\begin{equation*}
\sup\left\{\frac{f^{\#}(z)}{\varphi(|z|)}: z\in f^{-1}(E)\right\}<\infty,
\end{equation*}
then $f$ is a $\varphi$-normal harmonic mapping.
\end{corollary}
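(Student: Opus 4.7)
My proposed proof is to observe that the Corollary is the specialization $k=1$ of Theorem \ref{thm:y}, so it suffices to verify each of that theorem's hypotheses in this setting.

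With $k=1$, the required set $E$ contains $k+4=5$ distinct complex numbers, matching the hypothesis of the Corollary exactly. The bound (\ref{eq:y-2}) reduces, via the definition (\ref{eq:esp}) of the extended spherical derivative with $k=1$, to
$$f^{\#(1)}(z)=\frac{|h'(z)|+|g'(z)|}{1+|f(z)|^{2}}=f^{\#}(z),$$
so (\ref{eq:y-2}) becomes precisely the sup condition assumed in the Corollary. The remaining hypothesis (\ref{eq:y-1}) demands a finite bound on $\{f^{(i)}(z):z\in f^{-1}(\{0\}),\ i=0,\ldots,k-1\}$; with $k=1$ the only admissible index is $i=0$, for which $f^{(0)}(z)=f(z)\equiv 0$ on $f^{-1}(\{0\})$. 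Hence the sup in (\ref{eq:y-1}) equals $0$, and the condition holds vacuously for any sense-preserving harmonic $f$. Thus Theorem \ref{thm:y} applies with $k=1$ and yields that $f$ is $\varphi$-normal.

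The main obstacle is not in the Corollary itself---whose proof is the one-step specialization above---but in Theorem \ref{thm:y}, which supplies the engine. That proof presumably proceeds by contradiction: if $f$ were not $\varphi$-normal, one would select $\{z_n\}\subset\mathbb{D}$ with $|z_n|\to 1^{-}$ and $f^{\#}(z_n)/\varphi(|z_n|)\to\infty$, set $\rho_n=1/f^{\#}(z_n)$ (so that $\rho_n\varphi(|z_n|)\to 0$ and $(1-|z_n|)/\rho_n\to\infty$), and apply Theorem \ref{thm:zp} with $\alpha=0$ to extract a non-constant sense-preserving entire harmonic limit $\Psi=H+\overline{K}$ of $\Psi_n(\zeta):=f(z_n+\rho_n\zeta)$ with $\Psi^{\#}\leq\Psi^{\#}(0)=1$. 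Using (\ref{eq: phi cnvrgnc}) together with $\rho_n\varphi(|z_n|)\to 0$, Hurwitz's theorem for sense-preserving harmonic mappings, and the chain-rule identity $\Psi_n^{\#(k)}(\zeta)=\rho_n^{k}f^{\#(k)}(z_n+\rho_n\zeta)$, the hypothesis (\ref{eq:y-2}) forces $\Psi^{\#(k)}(\zeta_0)=0$---equivalently $H^{(k)}(\zeta_0)=K^{(k)}(\zeta_0)=0$---at every $\zeta_0\in\Psi^{-1}(E)$, while (\ref{eq:y-1}) forces $\Psi^{(i)}(\zeta_0)=0$ for $0\leq i<k$ at every zero of $\Psi$. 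The delicate step is then to derive a contradiction from these rigid value-distribution constraints for a set of $k+4$ values: a harmonic-mapping analogue of the Lappan--Xu--Yang result, exploiting that the sense-preserving condition $|H'|>|K'|$ keeps $H$ non-constant and lets one apply the second main theorem to it, with the jump from the meromorphic threshold $k+3$ to the harmonic threshold $k+4$ accounting for the anti-holomorphic part $\overline{K}$. The present Corollary bypasses all of this and follows as above.
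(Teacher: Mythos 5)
Your proof is correct and takes the same route the paper intends: the paper states this corollary as an immediate consequence of Theorem \ref{thm:y} with $k=1$, giving no separate argument, and your verification that $f^{\#(1)}=f^{\#}$, that $k+4=5$, and that condition \eqref{eq:y-1} holds vacuously (since only $i=0$ occurs and $f\equiv 0$ on $f^{-1}(\{0\})$) is exactly the intended one-step specialization.
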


Analogous to the result of Tan and the present second author \cite[Theorem 1]{tan} where the authors proved that the cardinality of the five-point set in Lappan's theorem \cite[Theorem 1]{lappan} can be reduced to four under suitable conditions. We reduce the cardinality of the set $E$ in Theorem \ref{thm:y} and obtain a generalization of \cite[Theorem 1.6]{bohra} as follows:
 
\begin{theorem}\label{thm:ya}
Let $k$ be a positive integer and let $f=h+\bar{g}$ be a sense-preserving harmonic mapping in $\mathbb{D}$ such that 
\begin{equation}\label{eq:ya-1}
\sup\{f^{(i)}(z)=h^{(i)}(z)+\overline{g^{(i)}(z)}: z\in f^{-1}(\{0\}),~ i=0,\ldots, k-1\}<\infty.
\end{equation}
If there exists a set $E$ of $[k/2]+4$ distinct points in $\mathbb{C}$ such that 
\begin{equation}\label{eq:ya-2}
\sup\left\{\frac{f^{\#(k)}(z)}{\varphi(|z|)^k}: z\in f^{-1}(E)\right\}<\infty,
\end{equation}
and 
\begin{equation}\label{eq:ya-3}
\sup\left\{\left(\frac{|h^{(k+1)}(z)|+|g^{(k+1)}(z)|}{1+(|h^{(k)}(z)|+|g^{(k)}(z)|)^{k+1}}\right): z\in f^{-1}(E)\right\}<\infty,
\end{equation}
then $f$ is a $\varphi$-normal harmonic mapping, where $[x]$ denotes the greatest integer less than or equal to $x.$
\end{theorem}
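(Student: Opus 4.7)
The strategy is by contradiction: combine a Zalcman--Pang-style rescaling adapted to the $\varphi$-normal setting (in the spirit of Theorem~\ref{thm:zp}) with a multiplicity/value-distribution argument on the resulting limit harmonic mapping, following the pattern of Tan and Thin \cite{tan}. Suppose $f$ is not $\varphi$-normal, so $\sup_{z \in \mathbb{D}} f^{\#}(z)/\varphi(|z|) = \infty$. Using the smoothly increasing conditions \eqref{eq:phi} and \eqref{eq: phi cnvrgnc} together with Theorem~\ref{thm:zp} (applied in coordinates rescaled by $1/\varphi(|z|)$, so that $\varphi$-normality becomes a classical Marty-type boundedness), I would extract $\{w_n\} \subset \mathbb{D}$ with $|w_n| \to 1^{-}$ and $\rho_n > 0$ with $\rho_n \to 0$ and $\rho_n \varphi(|w_n|) \to 0$ such that
\begin{equation*}
g_n(\zeta) := f(w_n + \rho_n \zeta)
\end{equation*}
converges locally uniformly on $\mathbb{C}$ to a non-constant entire harmonic mapping $g = H + \bar{G}$ satisfying $g^{\#}(\zeta) \leq g^{\#}(0) = 1$; the sense-preserving property is inherited by $g$ via a Hurwitz argument on the dilatation.

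Next, I would transport \eqref{eq:ya-2} and \eqref{eq:ya-3} to the limit. A chain-rule computation gives $g_n^{\#(k)}(\zeta) = \rho_n^{k}\, f^{\#(k)}(w_n + \rho_n \zeta)$, and an analogous identity for the quotient in \eqref{eq:ya-3}. For every $a \in E$ and every $w \in g^{-1}(a)$, Hurwitz's theorem for harmonic mappings produces $w_n^{*} \to w$ with $g_n(w_n^{*}) = a$, so $w_n + \rho_n w_n^{*} \in f^{-1}(E)$. Using \eqref{eq:ya-2}, the asymptotic $\varphi(|w_n + \rho_n w_n^{*}|)/\varphi(|w_n|) \to 1$ supplied by \eqref{eq: phi cnvrgnc}, and $\rho_n\varphi(|w_n|) \to 0$, one concludes $g^{\#(k)}(w) = 0$, so $H^{(k)}(w) = G^{(k)}(w) = 0$. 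The analogous argument applied to \eqref{eq:ya-3} yields $H^{(k+1)}(w) = G^{(k+1)}(w) = 0$. The auxiliary condition \eqref{eq:ya-1} guarantees the rescaling behaves correctly when $0 \in E$.

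Finally, I would derive a contradiction via a value-distribution argument on the non-constant entire sense-preserving harmonic mapping $g = H + \bar{G}$ whose parts satisfy $|G'| \leq |H'|$. The coupled vanishing of $H^{(k)}, H^{(k+1)}, G^{(k)}, G^{(k+1)}$ at every point of $g^{-1}(E)$ forces each $a \in E$ to be attained with multiplicity at least $k + 2$ at every preimage. Applying a Nevanlinna second main theorem for entire harmonic mappings (reducing to $H$ and $G$ via the sense-preserving condition) to the $[k/2] + 4$ values of $E$ produces
\begin{equation*}
\bigl([k/2] + 2\bigr) T(r, g) \leq \sum_{a \in E} \overline{N}\!\left(r, \tfrac{1}{g - a}\right) + S(r, g) \leq \frac{[k/2]+4}{k+2}\, T(r, g) + S(r, g),
\end{equation*}
and since $([k/2]+2)(k+2) > [k/2] + 4$ for every $k \geq 1$, this forces $T(r, g) = O(1)$, so $g$ is constant, contradicting $g^{\#}(0) = 1$. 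The hard part will be this last step: formalizing a harmonic second main theorem that delivers the coupled multiplicity count from the joint vanishing of $H^{(k)}, H^{(k+1)}, G^{(k)}, G^{(k+1)}$. The Tan--Thin trick in \cite{tan} plays the interference between $f^{\#}$ and the zeros of $f^{(k+1)}$ to halve the cardinality of Lappan's set; translating this to sense-preserving harmonic mappings requires using $|G'| \leq |H'|$ to collapse the bookkeeping to holomorphic Nevanlinna data on $H$ and $G$, which is the technical crux of the proof.
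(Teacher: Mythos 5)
Your rescaling step and the transport of hypotheses \eqref{eq:ya-2} and \eqref{eq:ya-3} to the limit mapping agree with the paper's proof (the paper uses Lemma \ref{lem:bora}, i.e.\ the $\varphi$-normal rescaling of Bohra et al., rather than Theorem \ref{thm:zp}, but the outcome is the same: a non-constant entire sense-preserving limit $F=H+\overline{G}$ with $H^{(k)}=G^{(k)}=H^{(k+1)}=G^{(k+1)}=0$ on $F^{-1}(E)$). The genuine gap is in your final value-distribution step. Your claim that the vanishing of $H^{(k)},H^{(k+1)},G^{(k)},G^{(k+1)}$ at every point of $g^{-1}(a)$ ``forces each $a\in E$ to be attained with multiplicity at least $k+2$'' is false: the multiplicity of an $a$-point is governed by the vanishing of the \emph{first} derivatives $H',\dots$ at that point, and nothing prevents $H-a^{*}$ from having a simple zero at a point where $H^{(k)}$ and $H^{(k+1)}$ both happen to vanish. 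Hence the inequality $\sum_{a\in E}\overline{N}\bigl(r,\tfrac{1}{g-a}\bigr)\le \tfrac{[k/2]+4}{k+2}T(r,g)+S(r,g)$ has no justification, and the ``second main theorem for entire harmonic mappings'' into which you feed it is exactly the part you leave unproved.

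The paper closes this gap differently, and you should note both ingredients. First, since $F$ is sense-preserving and entire, Liouville's theorem applied to the dilatation gives $G'/H'\equiv b$ with $|b|<1$ constant, so $F(\zeta)=a$ is \emph{equivalent} to $H(\zeta)=a^{*}$ for an explicit $a^{*}$; this collapses the whole problem to the single holomorphic function $H$ and lets one invoke the classical second fundamental theorem (Lemma \ref{lem:n-2}) rather than any harmonic analogue. Second, the correct counting is not via multiplicities of $a$-points but via multiplicities of zeros of $H^{(k)}$: each $a^{*}$-point of $H$ is a zero of $H^{(k)}$ of multiplicity at least $2$ (because $H^{(k+1)}$ vanishes there too), whence
\begin{equation*}
\sum_{a^{*}\in E^{*}}\overline{N}\left(r,\frac{1}{H-a^{*}}\right)\le \overline{N}_{(2}\left(r,\frac{1}{H^{(k)}}\right)\le \frac{1}{2}N\left(r,\frac{1}{H^{(k)}}\right)\le \frac{k+1}{2}T(r,H)+S(r,H),
\end{equation*}
which contradicts $([k/2]+2)T(r,H)\le\sum_{a^{*}}\overline{N}(r,1/(H-a^{*}))+S(r,H)$. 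Finally, you misidentify the role of \eqref{eq:ya-1}: it is not about the case $0\in E$, but forces every zero of $F$ to have multiplicity at least $k$, which is precisely what rules out $H^{(k)}\equiv 0$ (a non-constant polynomial of degree at most $k-1$ cannot have all zeros of multiplicity $\ge k$) and thereby makes the counting of zeros of $H^{(k)}$ meaningful.
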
 
By taking $k=1,$ in Theorem \ref{thm:ya}, we recover the following:
\begin{corollary}\cite[Theorem 1.6]{bohra}
   Let $f=h+\bar{g}$ be a sense preserving harmonic mapping in $\mathbb{D}.$ If there exists a set $E$ of four distinct complex numbers such that 
\begin{equation*}
\sup\left\{\frac{f^{\#}(z)}{\varphi(|z|)}: z\in f^{-1}(E)\right\}<\infty,
\end{equation*}
and 
\begin{equation*}
\sup\left\{\left(\frac{|h''(z)|+|g''(z)|}{1+(|h'(z)|+|g'(z)|)^{2}}\right): z\in f^{-1}(E)\right\}<\infty,
\end{equation*}
then $f$ is a $\varphi$-normal harmonic mapping.
\end{corollary}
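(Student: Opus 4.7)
The plan is to argue by contradiction, adapting the Zalcman--Pang rescaling of Theorem \ref{thm:zp} to the $\varphi$-normal setting. Suppose $f$ is not $\varphi$-normal: then there exist $\{w_n\}\subset\mathbb{D}$ with $|w_n|\to 1^{-}$ and $f^{\#}(w_n)/\varphi(|w_n|)\to\infty$. Composing with the scale change $\zeta\mapsto w_n+\zeta/\varphi(|w_n|)$ and exploiting the uniform convergence \eqref{eq: phi cnvrgnc} of the auxiliary factor $\mathcal{R}_{w_n}$, the failure of $\varphi$-normality transfers to failure of normality for the rescaled family on compacta of $\mathbb{C}$. Applying Theorem \ref{thm:zp} with the exponent $\alpha=k$ to this family then produces sequences $\{z_n\}\subset\mathbb{D}$ and $\rho_n\to 0^{+}$ such that, after subtracting from $f(z_n+\rho_n\zeta)$ the Taylor data of $f$ at $z_n$ of order $<k$ (whose coefficients are controlled by \eqref{eq:ya-1} whenever $z_n$ accumulates near $f^{-1}(0)$), the rescaled harmonic mappings
\begin{equation*}
F_n(\zeta):=\rho_n^{\,k}\bigl(f(z_n+\rho_n\zeta)-P_n(\zeta)\bigr)
\end{equation*}
converge locally uniformly to a non-constant sense-preserving entire harmonic mapping $G=H+\overline{K}$ with $G^{\#(k)}(\zeta)\le G^{\#(k)}(0)=1$.

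In the second step I would transfer the two bounds on $f^{-1}(E)$ to the limit $G$. For each $c\in E$ and each $\zeta_0\in G^{-1}(c)$, find $u_n=z_n+\rho_n\zeta_n\in f^{-1}(c)$ with $\zeta_n\to\zeta_0$. The bound \eqref{eq:ya-2} rescales exactly to $G^{\#(k)}(\zeta_0)<\infty$. The key observation is that the bound \eqref{eq:ya-3}, when pulled back through the substitution $\zeta=(w-z_n)/\rho_n$, acquires an extra factor of $\rho_n$ from the additional derivative in the numerator, while the denominator $1+(|h^{(k)}|+|g^{(k)}|)^{k+1}$ carries total weight $k+1$ on the derivatives against weight $k$ in the rescaling. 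The resulting mismatch forces the $(k+1)$-st derivatives of $H$ and $K$ to vanish at $\zeta_0$; equivalently, $\zeta_0$ is a zero of $G^{(k+1)}$, i.e., $c$ is a totally ramified value of multiplicity at least two for the harmonic mapping $H^{(k)}+\overline{K^{(k)}}$ on $\mathbb{C}$. This is the harmonic analogue of the Tan--Thin mechanism \cite{tan}.

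The final step invokes a Picard-type obstruction for entire harmonic mappings: a non-constant sense-preserving entire harmonic mapping whose $k$-th derivative has $[k/2]+4$ totally ramified values of multiplicity at least two cannot exist. In the meromorphic setting four totally ramified values exhaust the defect budget of $2$ supplied by Nevanlinna's second main theorem; passing to the $k$-th derivative trades multiplicity for order and yields, via the usual defect-relation counting, an extra $[k/2]$ permissible totally ramified values. Applied to $H^{(k)}$ and combined with the Jacobian positivity $|H'|>|K'|$ that $G$ inherits from $f$, this contradicts the non-constancy of $G$, finishing the argument.

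The hard part will be the second step, specifically the bookkeeping that turns \eqref{eq:ya-3} into the ramification statement $G^{(k+1)}(\zeta_0)=0$. Because $f=h+\overline{g}$ is harmonic rather than holomorphic, one cannot simply invoke ``$f^{(k)}-c$ has a zero of multiplicity two''; instead, the conditions on $|h^{(k+1)}|+|g^{(k+1)}|$ and the normalization $1+(|h^{(k)}|+|g^{(k)}|)^{k+1}$ must be separated into their analytic and anti-analytic components and handled through the canonical decomposition of the limit $G$. Guaranteeing that the sense-preserving property, together with the pointwise vanishing of both $H^{(k+1)}$ and $K^{(k+1)}$ on $G^{-1}(E)$, suffices for a genuine multiplicity-two ramification compatible with a Picard/second-main-theorem statement for harmonic mappings is the delicate point of the proof.
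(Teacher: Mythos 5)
The statement you are asked to prove is, in the paper, an immediate specialization: set $k=1$ in Theorem \ref{thm:ya}, note that condition \eqref{eq:ya-1} is then vacuous (it only concerns $f(z)$ for $z\in f^{-1}(\{0\})$, where $f$ vanishes) and that $[1/2]+4=4$. Your proposal instead tries to reprove the general theorem by rescaling, and it breaks down at three points. First, the rescaling device for $\varphi$-normality is the one of Bohra et al.\ (the second lemma of Section 2): $F_n(\zeta)=f(z_n+\rho_n\zeta/\varphi(|z_n|))\to F=H+\overline{G}$ with \emph{no} prefactor and no Taylor subtraction. Your choice of $\alpha=k$ in Theorem \ref{thm:zp} (which characterizes normality, not $\varphi$-normality) combined with subtracting the Taylor data $P_n$ produces a limit that no longer records where $f$ attains the values of $E$; you then cannot produce, via Hurwitz (Lemma \ref{lem:hurwitz}), points $\zeta_n\to\zeta_0$ with $f(z_n+\rho_n\zeta_n/\varphi(|z_n|))=c$, and the entire transfer of the hypotheses on $f^{-1}(E)$ to the limit collapses. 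Second, the transfer of \eqref{eq:ya-2} must yield \emph{vanishing}, not finiteness: since $F_n^{\#(k)}(\zeta_n^*)=(\rho_n/\varphi(|z_n|))^k f^{\#(k)}(w_n)$ and $f^{\#(k)}(w_n)\le M\varphi(|w_n|)^k$ with $\varphi(|w_n|)/\varphi(|z_n|)\to 1$, the surviving factor $\rho_n^k\to 0$ forces $F^{\#(k)}(\zeta_0)=0$, hence $H^{(k)}(\zeta_0)=G^{(k)}(\zeta_0)=0$; your conclusion ``$G^{\#(k)}(\zeta_0)<\infty$'' holds automatically for any locally uniform limit and carries no ramification information, so the argument produces nothing to contradict.

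Third, the closing ``Picard-type obstruction for entire harmonic mappings'' is precisely the statement that has to be proved, and you leave it as an assertion while conceding it is the delicate point. The paper resolves it by reducing to the analytic part: $F$ is sense-preserving and entire, so its dilatation $G'/H'$ is a constant $b$ with $|b|<1$ by Liouville's theorem, and $F(\zeta)=a$ is equivalent to $H(\zeta)=a^*$ with $a^*$ given by \eqref{eq:y-5}; then $H$ is an honest entire function to which Lemmas \ref{lem:n-1}, \ref{lem:n-2} and \ref{lem:n-3} apply. Since $H^{(k)}$ and $H^{(k+1)}$ both vanish on $H^{-1}(E^*)$, each such point is a zero of $H^{(k)}$ of multiplicity at least two, and the chain
\begin{equation*}
([k/2]+2)T(r,H)\le \overline{N}_{(2}\Bigl(r,\tfrac{1}{H^{(k)}}\Bigr)+S(r,H)\le \tfrac{1}{2}T(r,H^{(k)})+S(r,H)\le \tfrac{k+1}{2}T(r,H)+S(r,H)
\end{equation*}
gives the contradiction. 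Without this reduction your harmonic ``defect budget'' count has no theorem behind it. For the corollary itself, the shortest correct proof is simply to invoke Theorem \ref{thm:ya} with $k=1$.
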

 %%%%%%%%%%%%%%%%%%%%%%%%%%%%%%

\section{Preliminary Lemmas}
In this section, we state some preliminary results which are crucial to prove the main results of the present paper.

%%%%
The following lemma generalizes Marty’s theorem \cite[Theorem 4]{marty}, extending it from families of meromorphic functions to families of harmonic mappings.

\begin{lemma}\label{lem:ponnu}\cite[Lemma 1]{deng}
A family $\mathcal{F}$ of harmonic mappings $f=h+\overline{g}$ in $\mathbb{D}$ is normal if 
$\{f^{\#}(z)\,:\,f\in \mathcal{F}\}$ is locally uniformly  bounded.  
\end{lemma}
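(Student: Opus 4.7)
The strategy is a Marty-type argument: first derive a chordal-Lipschitz estimate from the bound on $f^{\#}$, then apply Arzelà--Ascoli to extract a spherically convergent subsequence, and finally check that the limit is of the required form. Writing $f = h + \bar g$, the differential $df = h'(z)\,dz + \overline{g'(z)\,dz}$ satisfies $|df|\le(|h'(z)|+|g'(z)|)\,|dz|$. Integrating the chordal line element $2\,|dw|/(1+|w|^2)$ along $f\circ\gamma$ for a rectifiable path $\gamma$ from $z$ to $w$ in $\mathbb{D}$ and using the definition \eqref{eq: spherical derivative} yields the fundamental estimate
\begin{equation*}
\chi(f(z),f(w)) \;\le\; 2\int_{\gamma}\frac{|h'(\zeta)|+|g'(\zeta)|}{1+|f(\zeta)|^2}\,|d\zeta| \;=\; 2\int_{\gamma} f^{\#}(\zeta)\,|d\zeta|,
\end{equation*}
where $\chi$ denotes the chordal metric on $\hat{\mathbb{C}}$.

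Given a compact set $K\subset\mathbb{D}$, one enlarges it to a compact convex $K'\subset\mathbb{D}$ on which the hypothesis supplies a uniform bound $f^{\#}\le M_{K'}$ valid for every $f\in\mathcal{F}$. Taking $\gamma$ to be the straight segment from $z$ to $w\in K$ then produces the uniform Lipschitz estimate $\chi(f(z),f(w))\le 2M_{K'}|z-w|$ in the chordal metric. Since $\hat{\mathbb{C}}$ is compact, pointwise precompactness is automatic, so Arzelà--Ascoli applied in $C(K,\hat{\mathbb{C}})$ with its uniform chordal topology shows that $\mathcal{F}|_{K}$ is precompact. A diagonal argument over an exhaustion of $\mathbb{D}$ by compact sets then extracts, from any sequence $\{f_n\}\subset\mathcal{F}$, a subsequence converging spherically uniformly on compact subsets of $\mathbb{D}$ to some continuous $F:\mathbb{D}\to\hat{\mathbb{C}}$.

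The remaining and most delicate step is to show that $F$ is either a finite-valued harmonic mapping on $\mathbb{D}$ or identically $\infty$. On the open set $V=F^{-1}(\mathbb{C})$, spherical convergence becomes uniform-Euclidean on compacts (since $F|_{K}$ is bounded for any compact $K\subset V$), so $f_n$ is locally Euclidean-bounded on $V$; together with $f_{n}^{\#}\le M_{K}$ this forces a local bound on $|h_{n}'|+|g_{n}'|$ throughout $V$. Fixing a base point $z_{*}\in V$ and normalizing the canonical decomposition by $g_{n}(z_{*})=0$, Montel's theorem makes $\{h_{n}\}$ and $\{g_{n}\}$ into normal families of holomorphic functions on $V$; extracting a further subsequence gives holomorphic limits $h,g$ on $V$ with $F=h+\bar g$ harmonic there. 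The main obstacle is ruling out a proper nonempty $V$; I expect this to follow from the connectedness of $\mathbb{D}$ combined with a Vitali/Hurwitz-type rigidity exploiting that each $h_{n},g_{n}$ is holomorphic on all of $\mathbb{D}$ (not merely on $V$), which should force the dichotomy $V=\mathbb{D}$ or $V=\emptyset$ and complete the verification of normality.
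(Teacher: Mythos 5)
First, a point of reference: the paper does not prove this lemma at all --- it is imported verbatim from \cite[Lemma 1]{deng} as a preliminary --- so there is no internal proof to compare against. Measured against the standard argument in that source, your first two steps are exactly right and complete: $|df|\le(|h'|+|g'|)\,|dz|$ gives the chordal Lipschitz estimate $\chi(f(z),f(w))\le 2\int_\gamma f^{\#}(\zeta)\,|d\zeta|$, hence local equicontinuity into the compact space $\widehat{\mathbb{C}}$, and Arzel\`{a}--Ascoli plus a diagonal extraction yields a subsequence converging locally uniformly in the spherical metric. Since normality for families of harmonic mappings is taken in \cite{deng} (and implicitly here) to mean precisely this spherical precompactness, the lemma is already proved at that point, and your third step is not required for the statement as given.

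That said, the third step is where your write-up stops being a proof. You explicitly defer the key point --- that $V=F^{-1}(\mathbb{C})$ is either all of $\mathbb{D}$ or empty --- to an unproved ``Vitali/Hurwitz-type rigidity,'' and the mechanism you gesture at does not obviously close. The normalization $g_n(z_*)=0$ controls $g_n$ (by integrating $g_n'$ from $z_*$) only on the path-component of $V$ containing $z_*$; on $\mathbb{D}\setminus V$ the hypothesis gives only $|h_n'|+|g_n'|\le M(1+|f_n|^2)$ with $|f_n|\to\infty$, so $h_n$ and $g_n$ are not locally bounded there and Montel/Vitali cannot be invoked on all of $\mathbb{D}$. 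In the meromorphic version of this dichotomy one passes to $1/f_n$ near a point where $F=\infty$ and applies Hurwitz; since $1/f$ is not harmonic, that trick has no analogue here, and the Lipschitz estimate alone yields only continuity of $F$, not the rigidity you need. So: if you claim normality in the precompactness sense, delete the third step and you are done; if you want the stronger conclusion that the limit is harmonic or identically $\infty$ (which this paper does use silently in later proofs), that step is a genuine gap as written.
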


%%%%%%%%%
The next lemma is a extension of a result of Lohwater and Pommerenke \cite[Theorem 1]{lohwater} to $\varphi$-normal harmonic mappings.

\begin{lemma}\cite[Theorem 1.3]{bohra}
A non-constant mapping $f$ harmonic in $\mathbb{D}$ is $\varphi$-normal if and only if there do not 
exist sequences $\{z_n\},~\{\rho_n\},~\rho_n>0,~\rho_n\to 0$ as $n\to\infty$ such that the functions $$g_n(\zeta)=f\left(z_n+\frac{\rho_n\zeta}{\varphi(|z_n|)}\right)\to g(\zeta)$$ locally uniformly in $\mathbb{C},$ where $g$ is a non-constant harmonic mapping.
\end{lemma}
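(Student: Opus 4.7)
The plan is to prove the two implications separately. Both arguments rest on the chain-rule identity
$$
g_n^{\#}(\zeta) \;=\; \frac{\rho_n}{\varphi(|z_n|)}\, f^{\#}\!\left(z_n + \frac{\rho_n\,\zeta}{\varphi(|z_n|)}\right),
$$
which follows by differentiating the canonical decomposition $g_n(\zeta)=h(z_n+\rho_n\zeta/\varphi(|z_n|))+\overline{g(z_n+\rho_n\zeta/\varphi(|z_n|))}$ and using that the inner substitution is holomorphic in $\zeta$ because $\rho_n/\varphi(|z_n|)$ is real and positive. For the necessity direction, assume $f$ is $\varphi$-normal with constant $M$, and that such sequences exist. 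Passing to a subsequence we may assume $z_n\to z^{\ast}\in\overline{\D}$. If $|z^{\ast}|<1$, then $z_n+\rho_n\zeta/\varphi(|z_n|)\to z^{\ast}$ locally uniformly in $\zeta$, forcing $g_n\to f(z^{\ast})$, a constant, contrary to the hypothesis on $g$. Hence $|z_n|\to 1$, and the displayed identity combined with $f^{\#}(w)\le M\varphi(|w|)$ yields
$$
g_n^{\#}(\zeta)\;\le\; M\rho_n\,\mathcal{R}_{z_n}(\rho_n\zeta).
$$
Property \eqref{eq: phi cnvrgnc} gives $\mathcal{R}_{z_n}(\rho_n\zeta)\to 1$ uniformly on compacta of $\C$, so $g_n^{\#}\to 0$ locally uniformly. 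Since local uniform convergence of harmonic mappings transfers to local uniform convergence of their partial derivatives, $g_n^{\#}\to g^{\#}$; thus $g^{\#}\equiv 0$, forcing $g$ to be constant, a contradiction.

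For the sufficiency direction, assume $f$ is not $\varphi$-normal and choose $z_n\in\D$ with $M_n:=f^{\#}(z_n)/\varphi(|z_n|)\to\infty$. Since \eqref{eq:phi} forces $\varphi(r)\to\infty$ as $r\to 1^-$, we must have $|z_n|\to 1$. Form the rescalings
$$
F_n(\zeta)\;:=\;f\!\left(z_n+\frac{\zeta}{\varphi(|z_n|)}\right),
$$
harmonic on the expanding disks $|\zeta|<R_n:=(1-|z_n|)\varphi(|z_n|)\to\infty$, with $F_n^{\#}(0)=M_n\to\infty$. Apply a family-version Zalcman construction patterned on the proof of Theorem~\ref{thm:zp} to extract centers $\zeta_n\to 0$, scales $r_n\to 0^{+}$, and a subsequence along which $F_n(\zeta_n+r_n\eta)$ converges locally uniformly on $\C$ to a non-constant harmonic mapping $g$. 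Setting $w_n:=z_n+\zeta_n/\varphi(|z_n|)$ (so that $|w_n|\to 1$) and $\rho_n:=r_n\,\varphi(|w_n|)/\varphi(|z_n|)\to 0$ (using \eqref{eq: phi cnvrgnc}), we rewrite $F_n(\zeta_n+r_n\eta)=f(w_n+\rho_n\eta/\varphi(|w_n|))$, placing the blow-up in the required form.

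The principal obstacle lies in the sufficiency direction: Theorem~\ref{thm:zp} is formulated for a single harmonic mapping, whereas the blow-up must be produced for the family $\{F_n\}$. In the meromorphic setting, the family version is routine via Marty's criterion, but Lemma~\ref{lem:ponnu} supplies only one direction of Marty's theorem for harmonic mappings; promoting ``$F_n^{\#}(0)\to\infty$'' to non-normality of $\{F_n\}$ therefore requires an extra Hurwitz-type step. Specifically, if some subsequence converged locally uniformly in the spherical metric to a harmonic limit $F^{\ast}$ finite near $0$, then local uniform convergence of partial derivatives would give $F_n^{\#}\to(F^{\ast})^{\#}$ in a neighborhood of $0$, contradicting the blow-up $F_n^{\#}(0)\to\infty$; on the other hand, spherical convergence to $\infty$ on a neighborhood of $0$ would make the denominator $1+|F_n|^{2}$ dominate and drive $F_n^{\#}(0)$ to zero rather than to infinity, also impossible. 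Once non-normality of $\{F_n\}$ is secured, the Zalcman rescaling argument underlying Theorem~\ref{thm:zp}, transposed from a single function to this family, yields the required centers and scales, and the translation above produces the stated form.
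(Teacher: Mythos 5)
The paper offers no proof of this lemma: it is quoted verbatim from Bohra et al.\ \cite{bohra}, so your argument has to stand on its own. Your necessity half does: the identity $g_n^{\#}(\zeta)=\frac{\rho_n}{\varphi(|z_n|)}\,f^{\#}\bigl(z_n+\rho_n\zeta/\varphi(|z_n|)\bigr)$, the reduction to $|z_n|\to 1^-$, the bound $g_n^{\#}(\zeta)\le M\rho_n\,\mathcal{R}_{z_n}(\rho_n\zeta)\to 0$, and the locally uniform convergence of derivatives give $g^{\#}\equiv 0$, contradicting non-constancy of $g$. That part is fine.

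The sufficiency half has a genuine gap at exactly the point you flag as the principal obstacle. To promote $F_n^{\#}(0)\to\infty$ to non-normality of $\{F_n\}$ you argue that a subsequence converging spherically to $\infty$ near $0$ would have $F_n^{\#}(0)\to 0$ because ``the denominator $1+|F_n|^{2}$ dominates.'' That is true for meromorphic functions (pass to $1/F_n$), but it is false for harmonic mappings: take $F_n(z)=(in+n^{3}z)+\overline{n^{3}z}=in+2n^{3}\,\mathrm{Re}(z)$. Then $|F_n(z)|^{2}=n^{2}+4n^{6}(\mathrm{Re}\,z)^{2}\ge n^{2}$, so $F_n\to\infty$ uniformly on all of $\mathbb{C}$, and yet $F_n^{\#}(0)=2n^{3}/(1+n^{2})\to\infty$. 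This is precisely the known failure of the converse of Marty's theorem for harmonic mappings, which the paper itself records in the introduction (the converse requires an extra hypothesis such as $\mathrm{Re}(hg)>0$); Lemma \ref{lem:ponnu} is deliberately one-directional. So the deduction ``$F_n^{\#}(0)\to\infty\Rightarrow\{F_n\}$ is not normal'' is unjustified and false as a general implication, and the subsequent ``family-version Zalcman construction'' that depends on it is moreover only sketched. The repair is to avoid passing through non-normality of $\{F_n\}$ altogether: starting from $f^{\#}(z_n^{*})/\varphi(|z_n^{*}|)\to\infty$, run the Lohwater--Pommerenke/Zalcman extremal rescaling directly on $f$ with the weight $\varphi$ (the $\varphi$-weighted, $\alpha=0$ analogue of the auxiliary function $F_n(t,z)$ and the intermediate-value argument in the paper's proof of Theorem \ref{thm:zp}). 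This manufactures a rescaled sequence whose spherical derivatives are bounded by $1+o(1)$ and equal to $1$ at the origin, so Lemma \ref{lem:ponnu} applies to \emph{that} sequence; one must then still rule out a constant or identically infinite limit for it, which is the delicate step your outline does not address.
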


%%%%%%%%%%%%%
The following necessary condition for $\varphi$-normal harmonic mappings is useful to our study.

\begin{lemma}\label{lem:bora}\cite[Theorem 1.4]{bohra}
Let $f$ be a harmonic mapping in $\mathbb{D}$ and let $\varphi:[0,1)\rightarrow(0,\infty)$ be a smoothly increasing function 
such that $1/\varphi$ is convex. If $f$ is $\varphi$-normal, then, 
for any sequence of points $\{z_n\}\subset\mathbb{D}$ with $z_n\to 1^-$ as $n\to\infty,$ the family $\{f(z_n+z/\varphi(|z_n|))\}$ is normal in $\mathbb{C}.$  
\end{lemma}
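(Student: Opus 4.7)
The plan is to apply the harmonic-mapping version of Marty's criterion (Lemma \ref{lem:ponnu}) to the rescaled family $F_n(\zeta) := f(z_n + \zeta/\varphi(|z_n|))$. It suffices to show that the spherical derivatives $F_n^{\#}$ are uniformly bounded on every compact subset of $\mathbb{C}$; the asserted normality then follows, once one applies the criterion on a compact exhaustion of $\mathbb{C}$ to the family defined on the increasing open disks.

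Before computing, I would verify that the domain of each $F_n$ eventually contains any given compact set $K\subset\mathbb{C}$. If $w=z_n+\zeta/\varphi(|z_n|)$ with $\zeta\in K$, then $|w-z_n|\leq M/\varphi(|z_n|)$ where $M=\sup_{\zeta\in K}|\zeta|$. The condition $\varphi(r)(1-r)\to\infty$ from the definition of smoothly increasing, together with $|z_n|\to 1^-$, ensures that $M/\varphi(|z_n|)<1-|z_n|$ for $n$ large, so $w\in\mathbb{D}$ and $F_n$ is a genuine harmonic mapping on $K$.

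Writing $f=h+\bar g$, the canonical decomposition of $F_n$ is $F_n=H_n+\overline{G_n}$ with $H_n(\zeta)=h(w)$ and $G_n(\zeta)=g(w)-g(z_n)$. The chain rule gives $H_n'(\zeta)=h'(w)/\varphi(|z_n|)$ and $G_n'(\zeta)=g'(w)/\varphi(|z_n|)$, so
\begin{equation*}
F_n^{\#}(\zeta)=\frac{|H_n'(\zeta)|+|G_n'(\zeta)|}{1+|F_n(\zeta)|^2}=\frac{1}{\varphi(|z_n|)}\cdot\frac{|h'(w)|+|g'(w)|}{1+|f(w)|^2}=\frac{f^{\#}(w)}{\varphi(|z_n|)}.
\end{equation*}
By the $\varphi$-normality of $f$ there is $C>0$ with $f^{\#}(\zeta)\leq C\,\varphi(|\zeta|)$ for every $\zeta\in\mathbb{D}$, whence
\begin{equation*}
F_n^{\#}(\zeta)\;\leq\; C\,\frac{\varphi(|w|)}{\varphi(|z_n|)}\;=\;C\,\mathcal{R}_{z_n}(\zeta).
\end{equation*}

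The last step is to invoke the uniform convergence \eqref{eq: phi cnvrgnc} built into the smoothly increasing hypothesis: $\mathcal{R}_{z_n}(\zeta)\to 1$ uniformly on $K$. Consequently $F_n^{\#}\leq 2C$ on $K$ for all sufficiently large $n$, so the family $\{F_n\}$ has locally uniformly bounded spherical derivatives on $\mathbb{C}$, and Lemma \ref{lem:ponnu} delivers its normality. The step I expect to demand the most care is not the bound itself but the preliminary check that the composition $f(z_n+\zeta/\varphi(|z_n|))$ is defined on the compact set under consideration; once this is in place, the rest is a scaling identity plus the defining property of a smoothly increasing function. The convexity of $1/\varphi$ is not used in this argument and is presumably included for consistency with neighboring results.
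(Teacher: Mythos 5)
Your argument is correct. Note that the paper does not prove this lemma at all -- it is quoted verbatim from Bohra et al.\ \cite[Theorem 1.4]{bohra} as a preliminary -- so there is no in-paper proof to compare against; but your route (compute $F_n^{\#}(\zeta)=f^{\#}(w)/\varphi(|z_n|)$ for $w=z_n+\zeta/\varphi(|z_n|)$, bound it by $C\,\mathcal{R}_{z_n}(\zeta)$ using $\varphi$-normality, let \eqref{eq: phi cnvrgnc} give the locally uniform bound, and finish with Lemma \ref{lem:ponnu}) is exactly the standard one, and your preliminary domain check via \eqref{eq:phi} is the right point to be careful about. Two small remarks: the canonical decomposition should be $H_n(\zeta)=h(w)+\overline{g(z_n)}$ and $G_n(\zeta)=g(w)-g(z_n)$ so that $H_n+\overline{G_n}$ actually equals $F_n$ (this changes nothing, since $f^{\#}$ is insensitive to the additive constants in the decomposition); and you are right that the convexity of $1/\varphi$ is not used in this direction -- conditions \eqref{eq:phi} and \eqref{eq: phi cnvrgnc} alone suffice for the bound you derive.
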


%%%%%%%%

What  follows is a generalization of Hurwitz's theorem for harmonic mappings.
\begin{lemma}\cite[p. 10]{duren}\label{lem:hurwitz}
Let $\{f_n\}$ be a sequence of sense-preserving harmonic mappings in $\mathbb{D}$ such that $f_n$ converges
locally uniformly to a sense-preserving harmonic mapping $f$. Then  $z_0\in\mathbb{D}$ is a zero of $f$ if and 
only if $z_0$ is a cluster point of the zeros of $f_n$, $n\geq 1$.
\end{lemma}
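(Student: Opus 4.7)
The plan is to establish the equivalence in two directions; the forward implication follows immediately from uniform convergence, while the reverse requires topological tools tailored to sense-preserving harmonic mappings.

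For ``$z_0$ is a cluster point of zeros of $\{f_n\}\Rightarrow f(z_0)=0$'', suppose a subsequence $\{z_{n_k}\}$ satisfies $f_{n_k}(z_{n_k})=0$ and $z_{n_k}\to z_0$. On a closed disk $\overline{B(z_0,r)}\subset\mathbb{D}$ on which $f_n\to f$ uniformly, the inequality
\begin{equation*}
|f(z_0)|\le |f(z_0)-f(z_{n_k})|+|f(z_{n_k})-f_{n_k}(z_{n_k})|
\end{equation*}
forces $f(z_0)=0$, since the first term goes to $0$ by continuity of $f$ and the second by uniform convergence.

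For the converse, I would argue by contradiction: assume $f(z_0)=0$ but $z_0$ is not a cluster point of zeros of $\{f_n\}$. Passing to a subsequence, one can choose $\epsilon>0$ with $\overline{B(z_0,\epsilon)}\subset\mathbb{D}$ such that $f_n(z)\ne 0$ on $\overline{B(z_0,\epsilon)}$ for every $n$. A non-constant sense-preserving harmonic mapping is a light, open map (a classical consequence of Lewy's theorem cited in the introduction), so its zero set is discrete; shrinking $\epsilon$ if necessary, one may assume $z_0$ is the unique zero of $f$ in $\overline{B(z_0,\epsilon)}$. Setting $m:=\min\{|f(z)|:z\in\partial B(z_0,\epsilon)\}>0$, uniform convergence on $\overline{B(z_0,\epsilon)}$ yields $|f_n-f|<m/2$ there for all sufficiently large $n$, and in particular $|f_n|>m/2$ on $\partial B(z_0,\epsilon)$.

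The main obstacle is to derive a contradiction from the non-vanishing of $f_n$ inside $B(z_0,\epsilon)$: the classical Hurwitz proof uses the argument principle, which is unavailable for harmonic maps. Instead, I would invoke the Brouwer topological degree. The straight-line homotopy $H_t(z)=(1-t)f(z)+tf_n(z)$ satisfies $|H_t(z)|\ge |f(z)|-|f(z)-f_n(z)|>m/2$ on $\partial B(z_0,\epsilon)$ for every $t\in[0,1]$ and every large $n$, so homotopy invariance of the degree gives
\begin{equation*}
\deg\bigl(f_n,B(z_0,\epsilon),0\bigr)=\deg\bigl(f,B(z_0,\epsilon),0\bigr).
\end{equation*}
Since $f$ is sense-preserving with an isolated zero at $z_0$, it acts there as an orientation-preserving light open map (by Stoilow-type factorization, locally equivalent to $z\mapsto z^k$ for some integer $k\ge 1$), so the right-hand degree is a positive integer. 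Consequently $\deg(f_n,B(z_0,\epsilon),0)\ge 1$, which forces $f_n$ to vanish somewhere in $B(z_0,\epsilon)$, contradicting our hypothesis and completing the proof.
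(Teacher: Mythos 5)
The paper does not prove this lemma at all; it is imported verbatim from Duren's book, where it is obtained as a consequence of the argument principle for sense-preserving harmonic mappings. Your proof is correct and is essentially that same standard route: the Rouch\'e-type homotopy step for the Brouwer degree together with the positivity of the local degree of a sense-preserving (hence light and open) map at an isolated zero is precisely the content of the harmonic argument principle, so no genuinely different idea is involved.
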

We also need the following result in the proof of Theorem \ref{thm:lappan}.

\begin{lemma}\cite[Lemma 3]{deng}\label{lem:deng}
Let $f=h+\overline{g}$ be a sense-preserving harmonic mapping in $\mathbb{C}$ with $g(0)=0$. Then there are  
at most four values of $a$ for which all zeros of $f-a$ are multiple. 
\end{lemma}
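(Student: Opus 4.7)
The plan is to exploit the strong rigidity of sense-preserving harmonic mappings on the whole plane so as to reduce the statement to the classical Nevanlinna theorem on totally ramified values of an entire function. I would split the argument into three steps.

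First, I would show that the second dilatation $\omega:=g'/h'$ is constant. Since $f=h+\bar g$ is sense-preserving on $\mathbb{C}$, the condition $J_f=|h'|^2-|g'|^2>0$ forces $h'(z)\neq 0$ everywhere on $\mathbb{C}$ and $|\omega(z)|<1$ throughout $\mathbb{C}$. Thus $\omega$ is a bounded entire function, so by Liouville's theorem $\omega\equiv c$ for some constant $c$ with $|c|<1$. Integrating $g'=ch'$ and using the normalisation $g(0)=0$ gives $g(z)=c\bigl(h(z)-h(0)\bigr)$, hence
$$ f(z)=h(z)+\bar c\bigl(\overline{h(z)}-\overline{h(0)}\bigr). $$

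Second, I would linearise the range. Define the $\mathbb{R}$-linear map $T\colon\mathbb{C}\to\mathbb{C}$ by $T(w)=w+\bar c\,\bar w$. Its real-linear determinant equals $1-|c|^2>0$, so $T$ is an orientation-preserving real-linear bijection of $\mathbb{C}$. A short algebraic check then yields
$$ f(z)-a=T\bigl(h(z)-b\bigr),\qquad b:=T^{-1}\bigl(a+\bar c\,\overline{h(0)}\bigr), $$
and the assignment $a\mapsto b$ is itself a bijection of $\mathbb{C}$.

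Third, I would transfer multiplicity and invoke Nevanlinna. Because $T$ is a real-linear diffeomorphism of $\mathbb{C}$, the local topological degree of $f-a$ at a zero $z_0$ coincides with the order of vanishing of $h-b$ at $z_0$; in particular, $z_0$ is a multiple zero of $f-a$ precisely when $h'(z_0)=0$. Hence \emph{every} zero of $f-a$ is multiple if and only if $b$ is a totally ramified value of the entire function $h$. By Nevanlinna's classical theorem on totally ramified values, a non-constant meromorphic function on $\mathbb{C}$ admits at most four totally ramified values in $\mathbb{C}\cup\{\infty\}$, so at most four $b$'s are admissible, and via the bijection $a\mapsto b$ at most four $a$'s are as well.

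I expect the principal obstacle to be step three, specifically confirming that the notion of \emph{multiple zero} of $f-a$ used in the statement aligns with the local topological degree transported by $T$. Once that identification is in place, the Liouville--Nevanlinna combination delivers the claimed bound directly; the sense-preserving hypothesis is essential precisely because it is what enables the Liouville step that collapses the dilatation to a constant.
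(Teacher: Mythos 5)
The paper does not prove this lemma; it is quoted verbatim from Deng, Ponnusamy and Qiao, and your argument is correct and is essentially the proof given there: Liouville applied to the dilatation, reduction of $f-a$ to $h-b$ via the real-linear bijection $T$, and Nevanlinna's theorem on totally ramified values (which for the entire function $h$ in fact yields the sharper bound of at most two finite totally ramified values, comfortably within the claimed four). One caveat on your first step: if ``sense-preserving'' were read strictly as $J_f=|h'|^2-|g'|^2>0$ everywhere, then $h'$ would be zero-free, every zero of $f-a$ would be simple, and the lemma would be nearly vacuous; the operative definition (as in Duren's book) is that the dilatation $\omega=g'/h'$ extends to an analytic function with $|\omega|<1$, zeros of $h'$ being permitted, and under that reading your Liouville step and the identification ``$z_0$ is a multiple zero of $f-a$ iff $h'(z_0)=0$'' --- which is exactly the sense in which the present paper uses multiplicity, via $F^{\#}(\zeta_0)=0$ --- go through unchanged.
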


Finally, we need some results from the value distribution theory of Nevanlinna. We assume that the standard results and notations of the Nevanlinna's theory such as $m(r, f),$ $N(r, f),$ $T(r, f),$ and $S(r, f)$ is known to the reader. However, for the sake of completion, we state the required results below. For details, we refer the reader to \cite{hayman}

\begin{lemma}[{\bf First fundamental theorem}]\label{lem:n-1}
    Let $f$ be a meromorphic function in $\mathbb{C}.$ Then for any $a\in \mathbb{C},$ we have $$T\left(r,\frac{1}{f-a}\right)=T(r, f) + O(1).$$
\end{lemma}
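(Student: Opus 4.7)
The plan is to prove this classical Nevanlinna identity in two stages: first, that translation by a constant only changes the characteristic function by $O(1)$, and second, that inversion does the same. Throughout, the main tool will be Jensen's formula
\[
\frac{1}{2\pi}\int_0^{2\pi}\log|F(re^{i\theta})|\,d\theta \;=\; \log|c_F| + N(r,1/F) - N(r,F),
\]
where $c_F$ is the leading coefficient in the Laurent expansion of $F$ at the origin.

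First I would handle the translation. Recalling the elementary inequality
\[
\log^+|u-v| \;\le\; \log^+|u| + \log^+|v| + \log 2,
\]
I get $m(r,f-a) \le m(r,f) + \log^+|a| + \log 2$ and, by symmetry applied to $f = (f-a)+a$, the reverse inequality up to the same additive constant. Since $f$ and $f-a$ share the same poles with the same multiplicities, $N(r,f-a) = N(r,f)$ exactly. Adding gives $T(r,f-a) = T(r,f) + O(1)$, where the $O(1)$ bound depends only on $a$.

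Next I would establish $T(r,1/F) = T(r,F) + O(1)$ for any meromorphic $F$. Starting from Jensen's formula, split $\log|F| = \log^+|F| - \log^+(1/|F|)$ to rewrite the left-hand side as $m(r,F) - m(r,1/F)$. Rearranging,
\[
m(r,F) + N(r,F) \;=\; m(r,1/F) + N(r,1/F) - \log|c_F|,
\]
i.e.\ $T(r,F) = T(r,1/F) - \log|c_F|$. Applying this to $F = f-a$ yields $T(r,1/(f-a)) = T(r,f-a) + \log|c_{f-a}|$, and combining with the first step gives the claimed equality $T(r,1/(f-a)) = T(r,f) + O(1)$.

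The only mild technical subtlety is the presence of $c_{f-a}$, which is the first nonzero Laurent coefficient of $f-a$ at $0$; one must note that this is a constant depending on $f$ and $a$ but not on $r$, so it is absorbed into the $O(1)$ term. In the generic case $f(0)\ne a,\infty$ one simply has $c_{f-a} = f(0)-a$. This is the only place where care is required, and it is not a serious obstacle; the rest is the standard manipulation of $\log^+$ and counting functions.
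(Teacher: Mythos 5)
The paper gives no proof of this lemma: it is quoted as a standard fact of Nevanlinna theory with a reference to Hayman's book, and your argument is precisely the classical proof found there (Jensen's formula for the inversion step, the elementary inequality $\log^+|u+v|\le\log^+|u|+\log^+|v|+\log 2$ together with the invariance of the pole-counting function for the translation step), so it is correct and there is nothing to compare against. One tiny slip: rearranging $m(r,F)-m(r,1/F)=\log|c_F|+N(r,1/F)-N(r,F)$ gives $T(r,F)=T(r,1/F)+\log|c_F|$, not $-\log|c_F|$; since $\log|c_F|$ does not depend on $r$ it is absorbed into the $O(1)$ either way, so the conclusion is unaffected.
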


\begin{lemma}[{\bf Second fundamental theorem}]\label{lem:n-2}
    Let $f$ be a meromorphic function in $\mathbb{C}$ and $a_i,~1\leq i\leq q$ be $q~(\geq 3)$ distinct values in $\overline{\mathbb{C}}.$ Then $$(q-2)T(r, f)\leq\sum_{i=1}^{q}\overline{N}\left(r, \frac{1}{f-a_i}\right)+ S(r, f).$$
\end{lemma}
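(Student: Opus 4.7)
The plan is to follow the classical route in Nevanlinna theory, treating this as the Second Fundamental Theorem and proving it by a manipulation involving the logarithmic derivative. First, I would apply a Möbius transformation to reduce to the case in which all $a_i$ are finite; a Möbius change replaces $f$ by a meromorphic function with the same characteristic up to $O(1)$, and it permutes the target points, so the asserted inequality is invariant. Thus, assume $a_1,\ldots,a_q\in\mathbb{C}$ are distinct, and let $\delta=\tfrac12\min_{i\neq j}|a_i-a_j|$.

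Next I would introduce the auxiliary function
$$F(z)=\sum_{j=1}^{q}\frac{1}{f(z)-a_j}.$$
The key pointwise observation is that for any $z$ at most one term $1/|f(z)-a_j|$ can be large: if $|f(z)-a_k|<\delta$ then $|f(z)-a_j|\geq \delta$ for every $j\neq k$, so $|F(z)|\geq \tfrac12|f(z)-a_k|^{-1}$. From this one extracts
$$\sum_{j=1}^{q} m\!\left(r,\frac{1}{f-a_j}\right)\leq m(r,F)+O(1).$$
To bound $m(r,F)$, write
$$F=\frac{1}{f'}\sum_{j=1}^{q}\frac{f'}{f-a_j},$$
so that $m(r,F)\leq m(r,1/f')+\sum_{j}m(r,f'/(f-a_j))+O(\log q)$. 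By the lemma on the logarithmic derivative, each $m(r,f'/(f-a_j))=S(r,f)$, hence $m(r,F)\leq m(r,1/f')+S(r,f)$.

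The remaining step is to handle $m(r,1/f')$. By the First Fundamental Theorem (Lemma \ref{lem:n-1}), $m(r,1/f')=T(r,f')-N(r,1/f')+O(1)$, and $T(r,f')\leq m(r,f)+m(r,f'/f)+N(r,f')\leq T(r,f)+\overline{N}(r,f)+S(r,f)$. Collecting these, and combining with the First Fundamental Theorem applied to each $a_j$ to turn $m(r,1/(f-a_j))$ into $T(r,f)-N(r,1/(f-a_j))+O(1)$, gives
$$qT(r,f)\leq T(r,f)+\overline{N}(r,f)+\sum_{j=1}^{q}N\!\left(r,\frac{1}{f-a_j}\right)-N(r,1/f')+S(r,f).$$
The main obstacle, and the heart of the proof, is the zero-counting step that converts $N$ into $\overline{N}$ on the right-hand side. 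This uses that a zero of $f-a_j$ of multiplicity $\mu\geq 1$ produces a zero of $f'$ of multiplicity $\mu-1$, while poles of $f$ of multiplicity $\nu$ become poles of $f'$ of multiplicity $\nu+1$. Consequently
$$N(r,1/f')\geq \sum_{j=1}^{q}\Bigl(N(r,1/(f-a_j))-\overline{N}(r,1/(f-a_j))\Bigr),$$
and similarly one absorbs the $\overline{N}(r,f)$ term by pairing $N(r,f')=N(r,f)+\overline{N}(r,f)$ in the characteristic estimate above. Plugging these multiplicity bounds into the displayed inequality collapses the $N$ terms into $\overline{N}$ terms and yields
$$(q-2)T(r,f)\leq \sum_{j=1}^{q}\overline{N}\!\left(r,\frac{1}{f-a_j}\right)+S(r,f),$$
which is the desired conclusion (with the case of $a_j=\infty$ recovered by the Möbius reduction at the start).
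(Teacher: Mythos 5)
The paper does not prove this lemma at all: it is quoted as a standard result of Nevanlinna theory with a pointer to Hayman's book, so there is no in-paper argument to compare against. What you have written is, in outline, exactly the classical textbook proof of the second fundamental theorem with truncated counting functions: the Möbius reduction to finite targets, the auxiliary sum $F=\sum_j (f-a_j)^{-1}$ with the separation estimate, the lemma on the logarithmic derivative applied to each $f'/(f-a_j)$, the first fundamental theorem to convert $m$ into $T-N$, and the local multiplicity count $N(r,1/f')\geq\sum_j\bigl(N(r,1/(f-a_j))-\overline{N}(r,1/(f-a_j))\bigr)$ together with $\overline{N}(r,f)\leq T(r,f)$ to produce the $q-2$. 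The arithmetic in your displayed chain checks out. One small quantitative slip: with $\delta=\tfrac12\min_{i\neq j}|a_i-a_j|$ the pointwise bound $|F(z)|\geq\tfrac12|f(z)-a_k|^{-1}$ does not hold on all of $\{|f(z)-a_k|<\delta\}$, only where $|f(z)-a_k|\leq\delta/(2(q-1))$, since the other $q-1$ terms contribute up to $(q-1)/\delta$ in modulus; either shrink the threshold to $\delta/(2(q-1))$ or note that on the excluded annulus $\log^+|f-a_k|^{-1}$ is $O(1)$, so the integrated inequality $\sum_j m(r,1/(f-a_j))\leq m(r,F)+O(1)$ survives unchanged. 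Also, the sentence about ``absorbing $\overline{N}(r,f)$ by pairing $N(r,f')=N(r,f)+\overline{N}(r,f)$'' is muddled as written; that identity is what created the $\overline{N}(r,f)$ term in the bound for $T(r,f')$, and the absorption is simply $\overline{N}(r,f)\leq T(r,f)$, which accounts for the second unit in $q-2$. With those two points tidied up, your argument is a complete and standard proof of the cited lemma.
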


\begin{lemma}\label{lem:n-3}
    Let $k$ be a positive integer and $f$ be a non-constant meromorphic function in $\mathbb{C}.$ Then $$T(r, f^{(k)})\leq (k+1)T(r, f)+ S(r, f).$$
\end{lemma}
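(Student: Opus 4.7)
The plan is the standard Nevanlinna-theoretic argument: decompose $T(r,f^{(k)})=m(r,f^{(k)})+N(r,f^{(k)})$ and bound each term separately.

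For the counting function $N(r,f^{(k)})$, note that the only poles of $f^{(k)}$ are the poles of $f$, with orders increased by exactly $k$: a pole of $f$ of order $p$ at $z_0$ gives a pole of $f^{(k)}$ of order $p+k$. Summing over poles,
$$N(r,f^{(k)})=N(r,f)+k\,\bar N(r,f)\leq (k+1)N(r,f),$$
since $\bar N(r,f)\leq N(r,f)$.

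For the proximity function $m(r,f^{(k)})$, I apply the Logarithmic Derivative Lemma $m(r,F'/F)=S(r,F)$ telescopically. Writing $f^{(k)}/f=\prod_{i=1}^{k}f^{(i)}/f^{(i-1)}$ and using $\log^{+}|AB|\leq\log^{+}|A|+\log^{+}|B|$,
$$m(r,f^{(k)})\leq m(r,f)+\sum_{i=1}^{k}m\bigl(r,f^{(i)}/f^{(i-1)}\bigr)=m(r,f)+S(r,f),$$
where the inductively-established bound $T(r,f^{(i)})=O(T(r,f))$ is used to absorb the small terms $S(r,f^{(i-1)})$ into $S(r,f)$.

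Combining the two estimates yields
$$T(r,f^{(k)})\leq m(r,f)+(k+1)N(r,f)+S(r,f)\leq(k+1)T(r,f)+S(r,f),$$
as required. The only delicate point is verifying $S(r,f^{(i)})=S(r,f)$ for $1\leq i\leq k-1$: this is not circular because it follows by induction on $i$, the base case $i=1$ being exactly the $k=1$ instance of the lemma already proved by the same decomposition argument (namely $T(r,f')\leq 2T(r,f)+S(r,f)$). Once that inductive absorption is in hand, the proof is complete.
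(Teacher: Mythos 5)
Your proof is correct and is the standard argument: the paper itself gives no proof of this lemma, simply quoting it from Hayman's book, and the argument there is exactly your decomposition $T(r,f^{(k)})=m(r,f^{(k)})+N(r,f^{(k)})$ with the pole count $N(r,f^{(k)})=N(r,f)+k\bar N(r,f)$ and the telescoped lemma on the logarithmic derivative. Your remark on the inductive absorption of $S(r,f^{(i)})$ into $S(r,f)$ correctly handles the one point where the argument could otherwise appear circular.
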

%%%%%%%%%

%%%%
\section{Proof of Main Results}

\begin{proof}[Proof of Theorem \ref{thm:zp}]
Suppose that $f$ is not normal. Then there exists a sequence $\{z_n^*\}\subset\mathbb{D}$ with $z_n^*\to 1$ as $n\to\infty$ such that
\begin{equation}\label{eq:zp-1}
  (1-|z_n^*|^2)f^{\#}(z_n^*)\to\infty \mbox{ as } n\to\infty.  
\end{equation}
Let $\{r_n\}$ be a sequence such that $|z_n^*|<r_n<1$ and  
\begin{equation}\label{eq:zp-2}
  \left(1-\frac{|z_n^*|^2}{r_n^2}\right)f^{\#}(z_n^*)\to\infty \mbox{ as } n\to\infty.  
\end{equation}
Define $$F_n(t,z):=\frac{\left(1-\frac{|z|^2}{r_n^2}\right)^{1+\alpha}t^{1+\alpha}(1+ |f(z)|^2)f^{\#}(z)}{1+\left(1-\frac{|z|^2}{r_n^2}\right)^{2\alpha}t^{2\alpha}|f(z)|^2},$$ where $0<t\leq 1$ and $|z|<r_n.$
Clearly, the functions $F_n(t,z)$ are continuous in $(0,1]\times\{|z|<r_n\}.$ Also, since $\alpha>-1$ and $f$ is harmonic, it follows that 
\begin{equation}\label{eq:zp-3}
    \lim\limits_{t\to 0}F_n(t,z)=0.
\end{equation}
We claim that \begin{equation}\label{eq:zp-4}
   F_n(t,z)\geq t^{1+|\alpha|}\left(1-\frac{|z|^2}{r_n^2}\right)^{1+|\alpha|}f^{\#}(z).
\end{equation}
Indeed, if $\alpha>0,$ then using $\left(1-\frac{|z|^2}{r_n^2}\right)^{2\alpha}t^{2\alpha}\leq 1,$ we have
\begin{align*}
F_n(t,z) &=\frac{\left(1-\frac{|z|^2}{r_n^2}\right)^{1+\alpha}t^{1+\alpha}(1+ |f(z)|^2)f^{\#}(z)}{1+\left(1-\frac{|z|^2}{r_n^2}\right)^{2\alpha}t^{2\alpha}|f(z)|^2}\\
&\geq \frac{\left(1-\frac{|z|^2}{r_n^2}\right)^{1+\alpha}t^{1+\alpha}(1+ |f(z)|^2)f^{\#}(z)}{1+|f(z)|^2}\\
&=t^{1+\alpha}\left(1-\frac{|z|^2}{r_n^2}\right)^{1+\alpha}f^{\#}(z),
\end{align*}
and if $\alpha<0,$ then  using $\left(1-\frac{|z|^2}{r_n^2}\right)^{-2\alpha}t^{-2\alpha}\leq 1,$ we have
\begin{align*}
F_n(t,z) &=\frac{\left(1-\frac{|z|^2}{r_n^2}\right)^{1-\alpha}t^{1-\alpha}(1+ |f(z)|^2)f^{\#}(z)}{1+\left(1-\frac{|z|^2}{r_n^2}\right)^{-2\alpha}t^{-2\alpha}|f(z)|^2}\\
&\geq \frac{\left(1-\frac{|z|^2}{r_n^2}\right)^{1-\alpha}t^{1-\alpha}(1+ |f(z)|^2)f^{\#}(z)}{1+|f(z)|^2}\\
&=t^{1-\alpha}\left(1-\frac{|z|^2}{r_n^2}\right)^{1-\alpha}f^{\#}(z).
\end{align*}
This proves the claim. Now, from \eqref{eq:zp-2} and \eqref{eq:zp-4}, we get $$F_n(1, z_n^*)\geq \left(1-\frac{|z_n^*|^2}{r_n^2}\right)^{1+|\alpha|}f^{\#}(z_n^*)\to\infty \mbox{ as } n\to\infty.$$
Therefore, for sufficiently large $n,$ $F_n(1, z_n^*)>1$ and hence $$\sup\limits_{|z|<r_n}F_n(1,z)\geq F_n(1, z_n^*)>1.$$
On the other hand, from \eqref{eq:zp-2}, it follows that $$\sup\limits_{|z|<r_n}F_n(t,z)<1 \mbox{ for sufficiently small } t.$$ Thus, by the intermediate value theorem, there exist $t_n\in (0,1)$ and $z_n$ with $|z_n|<r_n$ such that
\begin{equation}\label{eq:zp-5}
    \sup\limits_{|z|<r_n}F_n(t_n,z)=F_n(t_n, z_n)=1.
\end{equation}
Now, from \eqref{eq:zp-4} and \eqref{eq:zp-5}, we have
$$1=F_n(t_n, z_n)\geq F_n(t_n, z_n^*)\geq t_n^{1+|\alpha|}\left(1-\frac{|z_n^*|^2}{r_n^2}\right)^{1+|\alpha|}f^{\#}(z_n^*)=t_n^{1+|\alpha|}F_n(1, z_n^*).$$
Since $F_n(1, z_n^*)\to\infty$ as $n\to\infty,$ it follows that $\lim\limits_{n\to\infty}t_n=0.$

Put $\rho_n:=\left(1-\frac{|z_n|^2}{r_n^2}\right)t_n.$ Then $\rho_n\to\infty$ as $n\to\infty.$ Also,
\begin{equation}\label{eq:rho}
\frac{\rho_n}{1-|z_n|}\leq \frac{\rho_n}{r_n-|z_n|}=\frac{(r_n+|z_n|)t_n}{r_n^2}\to 0 \mbox{ as } n\to\infty.
\end{equation}
Thus the function $g_n(\zeta)=\rho_n^{\alpha} f(z_n+\rho_n\zeta)$ is defined for $|\zeta|<R_n=\frac{1-|z_n|}{\rho_n}.$ Furthermore, a simple computation shows that
\begin{equation}\label{eq:zp-6}
    g_n^{\#}(\zeta)=\frac{\rho_n^{1+\alpha}(1+|f(z_n+\rho_n\zeta)|^2)f^{\#}(z_n+\rho_n\zeta)}{1+\rho_n^{2\alpha}|f(z_n+\rho_n\zeta)|^2}.
\end{equation}
Then 
\begin{align}\label{eq:zp-7}
g_n^{\#}(0)&=\frac{\rho_n^{1+\alpha}(1+|f(z_n)|^2)f^{\#}(z_n)}{1+\rho_n^{2\alpha}|f(z_n)|^2}\nonumber\\
&=\frac{\left(1-\frac{|z_n|^2}{r_n^2}\right)^{1+\alpha}t_n^{1+\alpha}(1+ |f(z_n)|^2)f^{\#}(z_n)}{1+\left(1-\frac{|z_n|^2}{r_n^2}\right)^{2\alpha}t_n^{2\alpha}|f(z_n)|^2}\nonumber\\
&=F_n(t_n, z_n)=1.
\end{align}
We now turn to show that the sequence $\{g_n(\zeta)\}$ is normal.

Since $\left(1-\frac{|z_n|^2}{r_n^2}\right)/\left(1-\frac{|z_n+\rho_n\zeta|^2}{r_n^2}\right)\to 1$ as $n\to\infty,$ there exists $\epsilon_n>0$ with $\epsilon_n\to 0$ as $n\to\infty$ such that
\begin{equation}\label{eq:zp-8}
    (1-\epsilon_n)\left(1-\frac{|z_n+\rho_n\zeta|^2}{r_n^2}\right)t_n\leq\rho_n\leq (1+\epsilon_n)\left(1-\frac{|z_n+\rho_n\zeta|^2}{r_n^2}\right)t_n.
\end{equation}
Therefore, from \eqref{eq:zp-5}, \eqref{eq:zp-6} and \eqref{eq:zp-8}, we obtain
\begin{align}\label{eq:zp-9}
  g_n^{\#}(\zeta)&\leq \frac{(1+\epsilon_n)^{1+\alpha}\left(1-\frac{|z_n+\rho_n\zeta|^2}{r_n^2}\right)^{1+\alpha}t_n^{1+\alpha}(1+ |f(z_n+\rho_n\zeta)|^2)f^{\#}(z_n+\rho_n\zeta)}{1+(1-\epsilon_n)^{2\alpha}\left(1-\frac{|z_n+\rho_n\zeta|^2}{r_n^2}\right)^{2\alpha}t_n^{2\alpha}|f(z_n+\rho_n\zeta)|^2}\nonumber\\
  &\leq \frac{(1+\epsilon_n)^{1+\alpha}\left(1-\frac{|z_n+\rho_n\zeta|^2}{r_n^2}\right)^{1+\alpha}t_n^{1+\alpha}(1+ |f(z_n+\rho_n\zeta)|^2)f^{\#}(z_n+\rho_n\zeta)}{(1-\epsilon_n)^{2\alpha}\left(1-\frac{|z_n+\rho_n\zeta|^2}{r_n^2}\right)^{2\alpha}t_n^{2\alpha}|f(z_n+\rho_n\zeta)|^2}\nonumber\\
  &\leq \frac{(1+\epsilon_n)^{1+\alpha}}{(1-\epsilon_n)^{2\alpha}}\to 1 \mbox{ as } n\to\infty.
\end{align}
Thus, by Lemma \ref{lem:ponnu}, we conclude that the sequence $\{g_n(\zeta)\}$ is normal and so we may assume that $\{g_n(\zeta)\}$ converges locally uniformly in $\mathbb{C}$ to a harmonic mapping $g(\zeta).$ Then from \eqref{eq:zp-7} and \eqref{eq:zp-9}, it follows that $g^{\#}(\zeta)\leq g^{\#}(0)=1\neq 0$ which further implies that $g$ is a non-constant harmonic mapping. To see the converse part, let $\alpha=0.$ Then 
 from Corollary \ref{cor:deng}, we deduce that $f$ is not normal. This completes the proof.
\end{proof}
%%%%%%%%%%
\begin{proof}[Proof of Theorem \ref{thm:g}]
 Suppose, on the contrary, that $f$ is not normal. Then there exists a sequence $\{z_n\}\subset\mathbb{D}$ and a sequence of positive real numbers $\{\rho_n\}$ such that $z_n\to 1^{-},~\rho_n\to 0$ as $n\to\infty,$ and $$F_n(\zeta):=\rho_n^2 f(z_n+\rho_n\zeta)=\rho_n^2\left[h((z_n+\rho_n\zeta)+ \overline{g((z_n+\rho_n\zeta)}\right]\to F(\zeta)$$ locally uniformly in $\mathbb{C},$ where $F$ is a non-constant harmonic mapping  such that $F^{\#}(\zeta)\leq 1$ on $\mathbb{C}$ and $F^{\#}(0)=1.$ Then there exists $a\in\mathbb{D}$ such that $|F(a)|>0,$ and therefore, for sufficiently large $n,$ $|F_n(a)|\neq 0.$ Thus, we have 
 \begin{align*}
   1\geq F_n^{\#}(a)&= \frac{\rho_n^3\left(|h'(z_n+\rho_n a)|+|g'((z_n+\rho_n a)|\right)}{1+\rho_n^4|f((z_n+\rho_n a)|^2}\\
   &=\frac{\rho_n^3\left(|h'(z_n+\rho_n a)|+|g'((z_n+\rho_n a)|\right)}{\rho_n^4|f((z_n+\rho_n a)|^2}\times\frac{|F_n(a)|^2}{1+|F_n(a)|^2}\\
   &\geq \frac{f^{\#}(z_n+\rho_n a)}{\rho_n}\times\frac{|F_n(a)|^2}{1+|F_n(a)|^2}\\
   &\geq \frac{\epsilon}{\rho_n}\times\frac{|F_n(a)|^2}{1+|F_n(a)|^2}\\
   &\to\infty \mbox{ as } n\to\infty.
 \end{align*}
This is a contradiction to the fact that $F^{\#}(\zeta)\leq 1,$ for every $\zeta\in\mathbb{C}.$ Thus, $f$ is a normal harmonic mapping.
\end{proof}
%%%%%%%%%%%
\begin{proof}[Proof of Theorem \ref{thm:lappan}]
Suppose that $f$ is not $\varphi$-normal harmonic mapping in $\mathbb{D}.$ Then
by Lemma \ref{lem:bora}, there exist sequences $\{z_n\}, \ \{\rho_n\}$, $\rho_n>0$ with $z_n\to 1^-$ and $\rho_n\to 0$ as $n\to\infty$ such that 
$$F_n(\zeta)=f\left(z_n+\frac{\rho_n}{\varphi(|z_n|)}\zeta\right)\to F(\zeta)$$ locally uniformly in $\mathbb{C},$ where $F$ is a non-constant sense-preserving harmonic mapping. Let $a\in {\mathbb{C}}$ such that 
$F-a$ has a zero at $\zeta_0$ which is not a multiple zero. This implies that $F^{\#}(\zeta_0)\neq 0.$ By Lemma~\ref{lem:hurwitz}, there exists $\zeta_n\to\zeta_0$ such that for sufficiently large $n,$ 
$$F_n(\zeta_n)=f\left(z_n+\frac{\rho_n}{\varphi(|z_n|)}\zeta_n\right)=a.$$ Put $w_n=\left(z_n+\frac{\rho_n}{\varphi(|z_n|)}\zeta_n\right)$ so that $F_n(\zeta_n)=f(w_n)=a.$ Since $F_n\to F$ locally uniformly, it follows that $$F_n^{\#}(\zeta_n)=\frac{\rho_n}{\varphi(|z_n|)}f^{\#}(w_n)\to F^{\#}(\zeta_0) \text{ as } n\to\infty$$ and hence, for each $t\geq 1,$ we have  $$\left(F_n^{\#}(\zeta_n)\right)^t=\frac{\rho_n^t}{\varphi(|z_n|)^t}\left(f^{\#}(w_n)\right)^t\to \left(F^{\#}(\zeta_0)\right)^t \text{ as } n\to\infty.$$  
Then 
\begin{align*}
\frac{\left(f^{\#}(w_n)\right)^t}{\varphi(|w_n|)} &=\frac{\varphi(|z_n|)^t}{\rho_n^t}\frac{\left(F_n^\#(\zeta_n)\right)^t}{\varphi(|w_n|)}\\
&=\frac{\varphi(|z_n|)^{t-1}}{\rho_n^t}\left(F^\#(\zeta_n)\right)^t\frac{\varphi(|z_n|)}{\varphi(|w_n|)}\\
&\geq \frac{1}{\rho_n^t(1-|z_n|)^{t-1}}\left(F^\#(\zeta_n)\right)^t\frac{\varphi(|z_n|)}{\varphi(|w_n|)}\\
&\to\infty \mbox{ as } n\to\infty,
\end{align*}
 because $\left(F_n^\#(\zeta_n)\right)^t\to \left(F^\#(\zeta_0)\right)^t\neq 0,~ \varphi(|w_n|)/\varphi(|z_n|)\to 1$, and $\rho_n\to 0$ as $n\to\infty.$

 Let $P(z)=a_mz^m+ a_{m-1}z^{m-1}+\cdots+a_1z+a_0,$ where $a_i,~0\leq i\leq m$ are non-negative real numbers and $a_m\neq 0.$ Then $$\frac{P(f^\#(w_n))}{\varphi(|w_n)}=a_m\frac{\left(f^\#(w_n)\right)^m}{\varphi(|w_n)}+ a_{m-1}\frac{\left(f^\#(w_n)\right)^{m-1}}{\varphi(|w_n)}+\cdots+ a_1\frac{\left(f^\#(w_n)\right)}{\varphi(|w_n)}+a_0\to\infty \mbox{ as } n\to\infty.$$
To this end, we have established that if $F(\zeta)-a$ has a simple zero, then $$\sup_{z\in f^{-1}(E)}\frac{P(f^{\#}(z))}{\varphi(|z|)}=\infty.$$

On the other hand, Lemma \ref{lem:deng} asserts that there are no more than four values of $a$ for which all zeros of $F(\zeta)-a$ are multiple zeros. Thus, we infer that if $f$ is a sense-preserving harmonic mapping which is not $\varphi$-normal in $\mathbb{D},$ then for any complex number $a$ with at most four possible exceptions, we have
$$\sup_{z\in f^{-1}(E)}\frac{P(f^{\#}(z))}{\varphi(|z|)}<\infty.$$
This completes the proof.
\end{proof}
%%%%%%%%%%%%%%%%%%%%%%%%%%%%
\begin{proof}[Proof of Theorem \ref{thm:d}]
 For $k=1,$ the result follows by definition of $\varphi$-normality. So, let $k\geq 2$ and suppose that $f$ is $\varphi$-normal but does not satisfy \eqref{eq:d-1}. Then there exists a sequence $\{z_n\}\subset\mathbb{D}$ with $z_n\to 1^-$ as $n\to\infty$ such that 
 \begin{equation}\label{eq:d-2}
   \frac{f^{\#(k)}(z_n)}{\varphi(|z_n|)^k} \to\infty \mbox{ as } n\to\infty.
\end{equation}
Define $$\mathcal{F}=\left\{F_n(z):=f\left(z_n+\frac{z}{\varphi(|z_n|)}\right)\right\}.$$ Then by Lemma \ref{lem:bora}, $\mathcal{F}$ is a normal family in $\mathbb{C}$ and so for each sequence $\{F_n\}\subset\mathcal{F},$ there is a subsequence of $\{F_n\},$ which we shall again denote by $\{F_n\},$ such that $\{F_n\}$ converges locally uniformly in $\mathbb{C}$ to a harmonic mapping $F=H+\overline{G}.$ This implies that $F_n^{\#(k)}(z)\to F^{\#(k)}(z)$ as $n\to\infty.$ That is, $$\frac{1}{\varphi(|z_n|)^k}f^{\#(k)}\left(z_n+\frac{z}{\varphi(|z_n|)}\right)\to F^{\#(k)}(z)=\frac{|H^{(k)}(z)|+|G^{(k)}(z)}{1+|F(z)|^{k+1}} \mbox{ as } n\to\infty.$$ Letting $z=0,$ we get $$\frac{f^{\#(k)}(z_n)}{\varphi(|z_n|)^k}\to F^{\#(k)}(0)=\frac{|H^{(k)}(0)|+|G^{(k)}(0)}{1+|F(0)|^{k+1}} \mbox{ as } n\to\infty.$$
Take $M=F^{\#(k)}(0).$ Then, for sufficiently large $n,$ $$\frac{f^{\#(k)}(z_n)}{\varphi(|z_n|)^k}\leq M+1,$$ a contradiction to \eqref{eq:d-2}. This completes the proof.
\end{proof}

%%%%%%%%
\begin{proof}[Proof of Theorem \ref{thm:y}]
Suppose, on the contrary, that $f=h+\bar{g}$ is not $\varphi$-normal. Then there exists a sequence of points$\{z_n\}\subset \mathbb{D}$ and positive real numbers $\rho_n$ with 
$z_n\to 1^-$ and $\rho_n\to 0$ as $n\to\infty$ such that the sequence 
\begin{equation}\label{eq:y-3} 
F_n(\zeta):=f\left(z_n+\frac{\rho_n\zeta}{\varphi(|z_n|)}\right)=h\left(z_n+\frac{\rho_n\zeta}{\varphi(|z_n|)}\right)+
\overline{g\left(z_n+\frac{\rho_n\zeta}{\varphi(|z_n|)}\right)}
\end{equation}
converges locally uniformly in $\mathbb{C}$ to a non-constant harmonic mapping $F(\zeta)=H(\zeta)+\overline{G(\zeta)}.$ 
Let $F_n(\zeta)=h_n(\zeta)+\overline{g_n(\zeta)},$ where  $$h_n(\zeta):= h\left(z_n+\frac{\rho_n\zeta}{\varphi(|z_n|)}\right) \text{ and }
g_n(\zeta):= g\left(z_n+\frac{\rho_n\zeta}{\varphi(|z_n|)}\right).$$ 
Then $h_n^{(i)}(\zeta)\to H^{(i)}(\zeta)$ and $g_n^{(i)}(\zeta)\to G^{(i)}(\zeta)$ as $n\to\infty.$ Therefore, 
$$F_n^{(i)}(\zeta)=\frac{\rho_n^i}{\varphi(|z_n|)^i}f^{(i)}\left(z_n+\frac{\rho_n\zeta}{\varphi(|z_n|)}\right)\to F^{(i)}(\zeta)=H^{(i)}(\zeta)+\overline{G^{(i)}(\zeta)}.$$\\ 
{\bf Claim 1:} Each zero of $F$ has multiplicity at least $k.$\\
Let $\zeta_0\in\mathbb{C}$ be such that $F(\zeta_0)=0.$ Then by Lemma \ref{lem:hurwitz}, there exists a sequence 
$\{\zeta_n\}$ with $\zeta_n\to\zeta_0$ as $n\to\infty$ such that 
\begin{equation*}
F_n(\zeta_n)=f\left(z_n+\frac{\rho_n\zeta_n}{\varphi(|z_n|)}\right)=0
\end{equation*}
By \eqref{eq:y-1}, there exists an $M>0$ such that 
\begin{equation*}
f^{(i)}\left(z_n+\frac{\rho_n\zeta}{\varphi(|z_n|)}\right)\leq M,~i=0,~\ldots,~k-1
\end{equation*}
Then 
\begin{equation}\label{eq:y-4}
F^{(i)}(\zeta_0)=\lim\limits_{n\to\infty} F_n^{(i)}(\zeta_n)=\lim\limits_{n\to\infty}\frac{\rho_n^i}{\varphi(|z_n|)^i}f^{(i)}\left(z_n+\frac{\rho_n\zeta}{\varphi(|z_n|)}\right)\leq\lim\limits_{n\to\infty}M\frac{\rho_n^i}{\varphi(|z_n|)^i}
\end{equation}
Using \eqref{eq:phi} and \eqref{eq:rho} in \eqref{eq:y-4}, we obtain $$F^{(i)}(\zeta_0)\leq\lim\limits_{n\to\infty}M\frac{\rho_n^i}{\varphi(|z_n|)^i}\leq\lim\limits_{n\to\infty}M\left(\frac{\rho_n}{1-|z_n|}\right)^i=0,~i=0,~\ldots,~k-1$$ 
This proves Claim $1.$\\
{\bf Claim 2:} $F(\zeta)=H(\zeta)+\overline{G(\zeta)}\in E\Rightarrow H^{(k)}(\zeta)=0$ and $G^{(k)}(\zeta)=0$\\
For any $a\in E,$ let $\zeta_0^*$ be a zero of $F-a.$ Then, again by Lemma \ref{lem:hurwitz}, there exists a sequence 
$\{\zeta_n^*\}$ with $\zeta_n^*\to\zeta_0^*$ as $n\to\infty$ such that 
\begin{equation*}
F_n(\zeta_n^*)=f\left(z_n+\frac{\rho_n\zeta_n^*}{\varphi(|z_n|)}\right)=a.
\end{equation*}
By \eqref{eq:y-2}, there exists $M^*>0$ such that $$f^{\#(k)}\left(z_n+\frac{\rho_n\zeta_n^*}{\varphi(|z_n|)}\right)\leq M^*\varphi\left(\left|z_n+\frac{\rho_n\zeta_n^*}{\varphi(|z_n|)}\right|\right)^k.$$
Since $$F_n^{(k)}(\zeta_n^*)=\frac{\rho_n^k}{\varphi(|z_n|)^k}f^{(k)}\left(z_n+\frac{\rho_n\zeta_n^*}{\varphi(|z_n|)}\right),$$ we have
\begin{align*}
 F_n^{\#(k)}(\zeta_n^*)&=  \frac{\rho_n^k}{\varphi(|z_n|)^k}f^{\#(k)}\left(z_n+\frac{\rho_n\zeta_n^*}{\varphi(|z_n|)}\right)\\
 &\leq \rho_n^k M^*\left[\frac{\varphi\left(\left|z_n+\frac{\rho_n\zeta_n^*}{\varphi(|z_n|)}\right|\right)}{\varphi(|z_n|)}\right]^k\\
 &\to 0 \mbox{ as } n\to\infty.
\end{align*}
Thus, $F^{\#(k)}(\zeta_0^*)=\lim\limits_{n\to\infty}F_n^{\#(k)}(\zeta_n^*)=0,$ that is, $$\frac{|H^{(k)}(\zeta_0^*)|+|G^{(k)}(\zeta_0^*)|}{1+|F(\zeta_0^*)|^{k+1}}=0.$$
Since $F$ is non-constant harmonic mapping, it follows that $H^{(k)}(\zeta_0^*)=0$ and $G^{(k)}(\zeta_0^*)=0.$ This proves Claim $2.$

Now, since $F(\zeta)=H(\zeta)+\overline{G(\zeta)}$ is a sense preserving harmonic mapping in $\mathbb{C},$ therefore, $|G'(\zeta)/H'(\zeta)|<1$ in $\mathbb{C},$ and so by Liouville's theorem, $\omega(\zeta)=G'(\zeta)/H'(\zeta)=b,$ where $b\in\mathbb{C}$ with $|b|<1.$ As in the proof of \cite[Lemma 2.5]{bohra}, for any $a\in\mathbb{C},$ $F(\zeta)=a$ is equivalent to 
\begin{equation}\label{eq:y-5}
H(\zeta)=\frac{a-\overline{ba}+\overline{bH(0)}-|b|^2H(0)}{1-|b|^2}.
\end{equation}
In particular, if $a\in E,$ then $F(\zeta)=a$ is equivalent to $H(\zeta)=a^*,$ where $a^*$ is determined by \eqref{eq:y-5}. Thus, corresponding to each $a\in E$ with $F(\zeta)\in E,$ there is a set $E^*$ consisting of points of the form $a^*$ such that $H(\zeta)\in E^*.$ This together with Claim $2$ imply that $H^{(k)}(\zeta)=0$ whenever $H(\zeta)\in E^*.$ Again, since $F$ is a non-constant and sense-preserving harmonic mapping in $\mathbb{C}$ having zeros of multiplicity at least $k,$ it follows that $H$ is a non-constant entire function having zeros of multiplicity at least $k$ and therefore, $H^{(k)}\not\equiv 0.$ Now, Lemma \ref{lem:n-2} along with Lemmas \ref{lem:n-1} and \ref{lem:n-3} yield 
\begin{align*}
 (k+2)T(r, H) &\leq \sum\limits_{a^*\in E^*}\overline{N}\left(r, \frac{1}{H-a^*}\right) + S(r, H)\\
 &\leq \overline{N}\left(r, \frac{1}{H^{(k)}}\right) + S(r, H)\\
 &\leq T(r, H^{(k)}) + S(r, H)\\
 &\leq (k+1)T(r, H) + S(r, H),
\end{align*}
which is a contradiction. Hence $f$ is a $\varphi$-normal harmonic mapping.
\end{proof}
%%%%%%
\begin{proof}[Proof of Theorem \ref{thm:ya}]
    Following the approach and notations used in the proof of the Theorem \ref{thm:y}, we have

    \begin{itemize}
        \item[(i)] A sequence $F_n(\zeta)=h_n(\zeta)+\overline{g_n(\zeta)}\to F(\zeta)=H(\zeta)+\overline{G(\zeta)}$ locally uniformly in $\mathbb{C},$ where $$F_n(\zeta)=f\left(z_n+\frac{\rho_n\zeta}{\varphi(|z_n|)}\right),~h_n(\zeta)= h\left(z_n+\frac{\rho_n\zeta}{\varphi(|z_n|)}\right),$$ and $$
g_n(\zeta)= g\left(z_n+\frac{\rho_n\zeta}{\varphi(|z_n|)}\right).$$
\item[(ii)] Each zero of $F$ has multiplicity at least $k.$ 
\item[(iii)] $F(\zeta)\in E\Rightarrow H^{(k)}(\zeta)=0$ and $G^{(k)}(\zeta)=0.$ 
    \end{itemize}
Now, we claim that $F(\zeta)\in E\Rightarrow H^{(k+1)}(\zeta)=0.$ Indeed, if $\zeta_0$ is a zero of $F-a,$ for any $a\in E,$ then there exists a sequence $\zeta_n\to\zeta_0$ such that for sufficiently large $n,$ $$F_n(\zeta_n)=f(w_n)=a, \text{ where } w_n:= \left(z_n+\frac{\rho_n\zeta_n}{\varphi(|z_n|)}\right).$$ By the condition $\eqref{eq:ya-2},$ there exists $M_1>0$ such that $$f^{\#(k)}(w_n)\leq M_1\varphi(w_n).$$ This implies that
\begin{align}\label{eq:ya-4}
  \left|h^{(k)}(w_n)\right| + \left|g^{(k)}(w_n)\right| &\leq M_1\left(1+\left|f(w_n)\right|^{k+1}\right)\varphi(w_n)\nonumber\\
  &\leq M_1\left(1+\max_{a\in E}\left|a\right|^{k+1}\right)\varphi(w_n).
\end{align}
Furthermore, by the condition \eqref{eq:ya-3}, there exists $M_2>0$ such that
\begin{equation}\label{eq:ya-5}
   \frac{\left|h^{(k+1)}(w_n)\right| + \left|g^{(k+1)}(w_n)\right|}{1+\left( \left|h^{(k)}(w_n)\right| + \left|g^{(k)}(w_n)\right|\right)^{k+1}} \leq M_2.
\end{equation}
Let $M=\max\{M_1,~M_2\}.$ Then from \eqref{eq:ya-4} and \eqref{eq:ya-5}, we have 
\begin{align*}
 \frac{\left|h_n^{(k+1)}(\zeta_n)\right| + \left|g_n^{(k+1)}(\zeta_n)\right|}{1+\left( \left|h_n^{(k)}(\zeta_n)\right| + \left|g_n^{(k)}(\zeta_n)\right|\right)^{k+1}} &=\frac{\left(\frac{\rho_n}{\varphi(|z_n|)}\right)^{k+1}\left(|h^{(k+1)}(w_n)|+|g^{(k+1)}(w_n)|\right)}{1+\left(\frac{\rho_n}{\varphi(|z_n|)}\right)^{k(k+1)}\left(|h^{(k)}(w_n)|+|g^{(k)}(w_n)|\right)^{k+1}}\\
 &= \frac{\left(\frac{\rho_n}{\varphi(|z_n|)}\right)^{k+1}\left(|h^{(k+1)}(w_n)|+|g^{(k+1)}(w_n)|\right)}{1+ \left(|h^{(k)}(w_n)|+|g^{(k)}(w_n)|\right)^{k+1}}\\ &\qquad
 \times \frac{1+ \left(|h^{(k)}(w_n)|+|g^{(k)}(w_n)|\right)^{k+1}}{1+\left(\frac{\rho_n}{\varphi(|z_n|)}\right)^{k(k+1)}\left(|h^{(k)}(w_n)|+|g^{(k)}(w_n)|\right)^{k+1}}\\
 &\leq M\left(\frac{\rho_n}{\varphi(|z_n|)}\right)^{k+1}\left(1+\left(|h^{(k)}(w_n)|+|g^{(k)}(w_n)|\right)\right)^{k+1}\\
 &\leq \rho_n^{k+1}M\left(1+\left(M\left(1+\max_{a\in E}|a|^{k+1}\right)\right)^{k+1}\right)\left(\frac{\varphi(|w_n|)}{\varphi(|z_n|)}\right)^{k+1}\\
 &\to 0 \mbox{ as } n\to\infty.
\end{align*}
Therefore, $|H^{(k+1)}(\zeta_0)|+|G^{(k+1)}(\zeta_0)|=0$ and hence $H^{(k+1)}(\zeta_0)=G^{(k+1)}(\zeta_0)=0.$ Proceeding as in the proof of Theorem \ref{thm:y}, we find that for any $a\in E,$ $F(\zeta)=a$ is equivalent to $H(\zeta)=a^*,$ where $a^*$ is determined by \eqref{eq:y-5}. Thus, corresponding to the set $E,$ we get a set $E^*$ such that $H(\zeta)\in E^*\Rightarrow H^{(i)}(\zeta)=0,~i=k,~k+1.$  
Now, Lemma \ref{lem:n-2} along with Lemmas \ref{lem:n-1} and \ref{lem:n-3} yield 
\begin{align*}
 ([k/2]+2)T(r, H) &\leq \sum\limits_{a^*\in E^*}\overline{N}\left(r, \frac{1}{H-a^*}\right) + S(r, H)\\
 &\leq \overline{N}_{(2}\left(r, \frac{1}{H^{(k)}}\right) + S(r, H)\\
 &\leq \frac{1}{2}N\left(r, \frac{1}{H^{(k)}}\right) + S(r, H)\\
 &\leq \frac{1}{2}T(r, H^{(k)}) + S(r, H)\\
 &\leq \frac{k+1}{2}T(r, H) + S(r, H)\\
 &\leq (k/2+1)T(r, H) + S(r, H).
\end{align*}
which is a contradiction. Hence $f$ is a $\varphi$-normal harmonic mapping.
\end{proof}
%%%%%%%%%%%%%%%%%%%%%%%%%

\medskip

   %\noindent{\bf Data Availability:} Data sharing is not applicable since the article is purely theoretical in nature.

\end{document}